\definecolor{nicegreen}{rgb}{0,0.5,0}
\definecolor{blue3}{rgb}{.1,.0,.4}
\definecolor{red3}{rgb}{.9,.0,.1}
\DeclareMathOperator{\conv}{conv}
\newcommand*\patchAmsMathEnvironmentForLineno[1]{%
  \expandafter\let\csname old#1\expandafter\endcsname\csname #1\endcsname
  \expandafter\let\csname oldend#1\expandafter\endcsname\csname end#1\endcsname
  \renewenvironment{#1}%
     {\linenomath\csname old#1\endcsname}%
     {\csname oldend#1\endcsname\endlinenomath}}%
\newcommand*\patchBothAmsMathEnvironmentsForLineno[1]{%
  \patchAmsMathEnvironmentForLineno{#1}%
  \patchAmsMathEnvironmentForLineno{#1*}}%
\definecolor{modra}{rgb}{0,0,.8}
\definecolor{seda}{rgb}{.7,.7,.7}
\newtheorem*{rep@theorem}{\rep@title}\newcommand{\newreptheorem}[2]{\newenvironment{rep#1}[1]{\def\rep@title{#2~\ref{##1}}\begin{rep@theorem}}{\end{rep@theorem}}}\makeatother
\newtheorem{theorem}{Theorem}
\newtheorem{lemma}[theorem]{Lemma}
\newtheorem{conjecture}[theorem]{Conjecture}
\theoremstyle{definition}
\newtheorem{observation}[theorem]{Observation}
\newcommand{\R}{\mathbb R}
\begin{document}
\title{Near equipartitions of colored point sets}

\author{
Andreas F. Holmsen\thanks{Department of Mathematical Sciences, KAIST, Daejeon, South Korea
     Email: {\tt  andreash@kaist.edu}
     Research partially supported by Swiss National Science Foundation
 Grants 200020-165977 and 200021-162884.
     }
\and
{Jan Kyn\v{c}l\thanks{Department of Applied Mathematics and Institute for Theoretical Computer
Science, Charles University, Prague. Email: {\tt kyncl@kam.mff.cuni.cz}. Supported by the grant no. 14-14179S of the Czech Science Foundation (GA\v{C}R)}}
\and
{Claudiu Valculescu\thanks{ EPFL, Lausanne, Switzerland. Email: {\tt adrian.valculescu@epfl.ch}. Research partially supported by Swiss National Science Foundation
 Grants 200020-165977 and 200021-162884.}}
}

\date{}
\maketitle

\begin{abstract}
Suppose that $nk$ points in general position in the plane are colored red and blue, with at least $n$ points of each color. We show that then there exist $n$ pairwise disjoint convex sets, each of them containing $k$ of the points, and each of them containing points of both colors.

We also show that if $P$ is a set of $n(d+1)$ points in general position in $\mathbb{R}^d$ colored by $d$ colors with at least $n$ points of each color, then there exist $n$ pairwise disjoint $d$-dimensional simplices with vertices in $P$, each of them containing a point of every color.

These results can be viewed as a step towards a common generalization of several previously known geometric partitioning results regarding colored point sets.
\smallskip
\\
\textbf{Keywords:} colored point set; convex equipartition; colorful island; ham sandwich theorem
\end{abstract}

\section{Introduction}

In this note, we prove two results concerning partitions of colored point sets. We conjecture a common generalization of these results, as well as various other related results and conjectures~\cite{aichholzer, akyiama, kano}. First we establish some basic terminology.

\paragraph{Definitions.} 
We say that a finite set in $\mathbb{R}^d$ is in \emph{general position} if each of its subsets of size at most $d+1$ is affinely independent.  A partition of a finite set $X$ into $m$ parts is called an \emph{$m$-coloring} of $X$, the parts are called \emph{color classes}, and we also say that $X$ is \emph{$m$-colored}. We allow the color classes to be empty. A subset $Y\subseteq X$ is called \emph{$j$-colorful} if $Y$ contains points from at least $j$ distinct color classes. Let $X$ be a subset of $\mathbb{R}^d$, and $Y\subseteq X$. The convex hull of $Y$, denoted by $\conv Y$, is called an \emph{island} (\emph{spanned by $X$}) if $X \cap \conv Y = Y$. Equivalently, we say that the set $X$ \emph{spans} $Y$. If $\conv Y$ is an island spanned by $X$ and $|Y| = k$, then we also say that $\conv Y$ is a \emph{$k$-island}. If $Y\subseteq X$, we say that the island $\conv Y$ is \emph{$j$-colorful} if $Y$ is $j$-colorful. See Figure~\ref{fig:island}. Notice that when $X$ is in general position and $k\le d+1$, then a $k$-island spanned by $X$ is a $(k-1)$-dimensional simplex with vertices in $X$.

\begin{figure}
\centering
\begin{tikzpicture}
 \newcommand\Square[1]{+(-#1,-#1) rectangle +(#1,#1)}
  \begin{scope}
    \fill[gray, opacity=.34] (1*360/8 +4: .75cm) -- (1*360/15 : 1.75cm) -- (2*360/27 : 2.75cm) -- (3*360/27 : 2.75cm) -- (2*360/15 : 1.75cm) -- cycle;
    \draw[gray] (1*360/8 +4: .75cm) -- (1*360/15 : 1.75cm) -- (2*360/27 : 2.75cm) -- (3*360/27 : 2.75cm) -- (2*360/15 : 1.75cm) -- cycle;
   \end{scope}
 \begin{scope}
    \fill[gray, opacity=.34] (2*360/8 +4: .75cm) -- (4*360/27 : 2.75cm) -- (5*360/27 : 2.75cm) -- (6*360/27 : 2.75cm) -- cycle;
    \draw[gray] (2*360/8 +4: .75cm) -- (4*360/27 : 2.75cm) -- (5*360/27 : 2.75cm) -- (6*360/27 : 2.75cm) -- cycle;
   \end{scope}
  \begin{scope}
    \fill[gray, opacity=.34] (4*360/15 : 1.75cm) -- (7*360/27 : 2.75cm) -- (8*360/27 : 2.75cm) -- (9*360/27 : 2.75cm) -- (5*360/15 : 1.75cm) -- cycle;
    \draw[gray] (4*360/15 : 1.75cm) -- (7*360/27 : 2.75cm) -- (8*360/27 : 2.75cm) -- (9*360/27 : 2.75cm) -- (5*360/15 : 1.75cm) -- cycle;
   \end{scope}
 \begin{scope}
\fill[gray, opacity=.34] (3*360/8 +4: .75cm) -- (10*360/27 : 2.75cm) -- (11*360/27 : 2.75cm) -- (7*360/15 : 1.75cm) -- cycle;
\draw[gray] (3*360/8 +4: .75cm) -- (10*360/27 : 2.75cm) -- (11*360/27 : 2.75cm) -- (7*360/15 : 1.75cm) -- cycle;
   \end{scope}
 \begin{scope}
    \fill[gray, opacity=.34] (8*360/15: 1.75cm) -- (12*360/27 : 2.75cm) -- (13*360/27 : 2.75cm) -- (14*360/27 : 2.75cm) -- (15*360/27 : 2.75cm) -- cycle;
    \draw[gray] (8*360/15: 1.75cm) -- (12*360/27 : 2.75cm) -- (13*360/27 : 2.75cm) -- (14*360/27 : 2.75cm) -- (15*360/27 : 2.75cm) -- cycle;
   \end{scope}
 \begin{scope}
    \fill[gray, opacity=.34] (4*360/8+4 : .75cm) -- (16*360/27 : 2.75cm) -- (17*360/27 : 2.75cm) -- (18*360/27 : 2.75cm) -- cycle;
    \draw[gray] (4*360/8+4 : .75cm) -- (16*360/27 : 2.75cm) -- (17*360/27 : 2.75cm) -- (18*360/27 : 2.75cm) -- cycle;
   \end{scope}
 \begin{scope}
    \fill[gray, opacity=.34] (5*360/8+4 : .75cm) -- (10*360/15 : 1.75cm) -- (19*360/27 : 2.75cm) -- (20*360/27 : 2.75cm) -- (11*360/15 : 1.75cm) -- cycle;
    \draw[gray] (5*360/8+4 : .75cm) -- (10*360/15 : 1.75cm) -- (19*360/27 : 2.75cm) -- (20*360/27 : 2.75cm) -- (11*360/15 : 1.75cm) -- cycle;
   \end{scope}
 \begin{scope}
    \fill[gray, opacity=.34] (6*360/8+4 : .75cm) -- (21*360/27 : 2.75cm) -- (22*360/27 : 2.75cm) -- (23*360/27 : 2.75cm) -- cycle;
    \draw[gray] (6*360/8+4 : .75cm) -- (21*360/27 : 2.75cm) -- (22*360/27 : 2.75cm) -- (23*360/27 : 2.75cm) -- cycle;
   \end{scope}
 \begin{scope}
    \fill[gray, opacity=.34] (7*360/8+4 : .75cm) -- (13*360/15 : 1.75cm) -- (24*360/27 : 2.75cm) -- (25*360/27 : 2.75cm) -- (14*360/15 : 1.75cm) -- cycle;
    \draw[gray] (7*360/8+4 : .75cm) -- (13*360/15 : 1.75cm) -- (24*360/27 : 2.75cm) -- (25*360/27 : 2.75cm) -- (14*360/15 : 1.75cm) -- cycle;
   \end{scope}
 \begin{scope}
    \fill[gray, opacity=.34] (8*360/8+4 : .75cm) -- (26*360/27 : 2.75cm) -- (27*360/27 : 2.75cm) -- (1*360/27 : 2.75cm) -- cycle;
    \draw[gray] (8*360/8+4 : .75cm) -- (26*360/27 : 2.75cm) -- (27*360/27 : 2.75cm) -- (1*360/27 : 2.75cm) -- cycle;
   \end{scope}
  \begin{scope}[rotate = 4]
    \foreach \i in {1,...,8} { \draw[red,fill=white] (360*\i/8  :.75cm)  circle (1.8pt); }
  \end{scope}
  \begin{scope}[rotate = 0]
    \foreach \i in {1,...,27} { \fill[blue] (360*\i/27  :2.75cm) circle (1.7pt); }
  \end{scope}
  \begin{scope}[rotate = 0]
    \foreach \i in {2,4, 7, 10, 13, 15 } { \fill[blue] (360*\i/15  :1.75cm) circle (1.7pt); }
   \foreach \i in {1,3,5,6,8,9,11,12,14} { \draw[red,fill=white] (360*\i/15  :1.75cm)  circle (1.8pt); }
 \end{scope}
 \end{tikzpicture}
\caption{A $2$-colored set of 50 points spanning 10 pairwise disjoint $2$-colorful $5$-islands. \label{fig:island}}
\end{figure}

\paragraph{The results.}
Our first result concerns partitions of $2$-colored planar point sets into $2$-colorful subsets of $k$ points with disjoint convex hulls.

\begin{theorem}
\label{theorem_plane}
Let $k\ge 2$ and $n\ge 1$ be integers, and let $X$ be a $2$-colored point set in general position in $\mathbb{R}^2$. Suppose that $|X| = kn$ and that there are at least $n$ points in each color class. Then $X$ spans $n$ pairwise disjoint $2$-colorful $k$-islands.
\end{theorem}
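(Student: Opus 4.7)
I would prove Theorem \ref{theorem_plane} by induction on $n$, peeling off a single $2$-colorful $k$-island with a straight-line cut at each inductive step. The base case $n=1$ is immediate: $X$ itself is a $k$-island, and it is $2$-colorful by hypothesis.

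For the inductive step I look for a line $\ell$ and a halfplane $H$ bounded by $\ell$ such that $|X\cap H|=k$, $X\cap H$ has a point of each color, and $X\setminus H$ has at least $n-1$ points of each color. Letting $\alpha$ denote the number of red points in $X\cap H$ and $r,b\ge n$ the color-class sizes, these three conditions are equivalent to $\alpha \in I := [\max(1,\,k-b+n-1),\,\min(k-1,\,r-n+1)]$, and a short calculation shows $I$ is a non-empty interval of integers whenever $r,b\ge n$ and $k\ge 2$. Once such $\ell$ is found, $X\cap H$ is a $k$-island (every halfplane cut spans one), and the inductive hypothesis applied to $X\setminus H$ yields $n-1$ more pairwise disjoint $2$-colorful $k$-islands, automatically disjoint from $X\cap H$ as they lie on the other side of $\ell$.

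To produce $\ell$, I parametrize oriented lines by their direction $\theta\in[0,2\pi)$, translating each so that the halfplane $H(\theta)$ on the left contains exactly $k$ points of $X$. As $\theta$ rotates, $H(\theta)$ changes by a one-point swap at each critical direction, so $\alpha(\theta)$ changes by $\pm 1$ or $0$. Consequently, $\alpha$ takes every integer value between its minimum and maximum, and it suffices to show that this range meets $I$. For this I would use the conservation law that, for $n\ge 2$, the halfplanes $H(\theta)$ and $H(\theta+\pi)$ are disjoint and together cover all but $(n-2)k$ ``middle'' points of $X$; this pins down $\alpha(\theta)+\alpha(\theta+\pi)$ closely enough to force the range of $\alpha$ to be wide enough to hit $I$.

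The main obstacle is exactly this last step. In the tightest configurations — for instance when one color class is nested strictly inside the convex hull of the other, so that almost every halfplane $k$-cut is monochromatic — the target interval $I$ can degenerate to a single integer, and one must verify that the rotating cut cannot jump over it. This requires a careful case analysis of how $\alpha(\theta)$ evolves across the critical directions, combining the discrete intermediate-value property with the conservation relation above. If the plain halfplane approach turns out to be insufficient in some exceptional regime, the fall-back is to strengthen the induction hypothesis in the spirit of Bespamyatnikh--Kirkpatrick--Snoeyink and Kaneko--Kano, prescribing per-region color multiplicities $(a_i,b_i)$ with $a_i+b_i=k$ and $a_i,b_i\ge 1$, and seeking a line cut whose counts match some subset sum $\sum_{i\in S}a_i,\sum_{i\in S}b_i$; the extra flexibility in choosing $S$ provides enough room to handle every configuration, and the original statement follows by picking any valid prescription.
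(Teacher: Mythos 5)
Your inductive step fails at the very first hurdle: the cut you need may not exist. Take $A$ to be $n$ red points clustered in a tiny disk around the origin and $B$ to be $(k-1)n$ blue points spread around a huge circle centered at the origin. Any open halfplane that contains a red point contains the origin up to a negligible error, hence contains an arc of more than half the circle, i.e., roughly $(k-1)n/2$ blue points --- far more than the $k-1$ blue points it is allowed to have once $n$ is large. So \emph{no} halfplane meeting $X$ in exactly $k$ points is $2$-colorful: your $\alpha(\theta)$ is identically $0$ and never reaches $I$, and no amount of care with the discrete intermediate-value argument or the $\alpha(\theta)+\alpha(\theta+\pi)$ conservation law can fix this. (The theorem is of course still true here --- slice the configuration into $n$ pie wedges from the center --- but those islands are not obtained by peeling off halfplanes one at a time.) Your fall-back suffers from the same defect: a line whose two sides realize a complementary pair of subset sums $\bigl(\sum_{i\in S}a_i,\sum_{i\in S}b_i\bigr)$ with $\emptyset\ne S\ne[n]$ also need not exist, and in fact ``no halfplane with $a_i$ red points has as many as $b_i$ blue points'' is precisely the hypothesis of the $3$-cutting theorem of Bespamyatnikh, Kirkpatrick and Snoeyink, whose whole point is that in this situation one must cut into \emph{three} convex fans rather than two halfplanes.

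That $3$-cutting theorem is the ingredient your proposal is missing, and it is the engine of the paper's proof (following Kaneko, Kano and Suzuki). Writing $|A|=na+s$, $|B|=nb+t$ with $s+t=n$ and $k=a+b+1$, the paper records for each $i$ a sign $\sigma_a(i)$ saying whether every halfplane with exactly $ia$ red points has fewer or more than $i(b+1)$ blue points (and similarly $\sigma_{a+1}(j)$ for $j(a+1)$ red versus $jb$ blue). If some halfplane achieves an exact pair, one cuts into two parts and recurses; if either sign sequence changes sign, the hypotheses of the $3$-cutting theorem are satisfied and one recurses inside the three resulting convex pieces; and if both sequences are constant, the signs $\sigma_a(t)$ and $\sigma_{a+1}(s)$ refer to complementary halfplanes and contradict each other. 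To salvage your argument you would have to import (or reprove) the $3$-cutting theorem and organize the case analysis around when a two-part cut fails --- at which point you have reconstructed the paper's proof rather than found an alternative to it.
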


Our second result concerns partitions of $d$-colored point sets in $\mathbb{R}^d$ into $d$-colorful subsets of $d+1$ points with disjoint convex hulls.

\begin{theorem}
\label{theorem_Rd}
Let $d\ge 2$ and $n\ge 1$ be integers, and let $X$ be a $d$-colored point set in general position in $\mathbb{R}^d$. Suppose that $|X| = (d+1)n$ and that there are at least $n$ points in each color class. Then $X$ spans $n$ pairwise disjoint $d$-colorful $(d+1)$-islands.
\end{theorem}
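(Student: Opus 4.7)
The plan is to reduce Theorem~\ref{theorem_Rd} to the theorem of Akiyama and Alon (1989), which asserts that, given any $d+1$ pairwise disjoint point sets of size $n$ each in general position in $\mathbb{R}^d$, there exist $n$ pairwise disjoint $d$-dimensional simplices with one vertex from each set. The idea is to introduce an artificial $(d+1)$-st class consisting of the ``excess'' points of the given coloring, and then read off a $d$-colorful partition from an Akiyama--Alon rainbow partition.

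First I would set up the reduction. Writing $c_i = |C_i|$, the hypothesis $\sum_{i=1}^d c_i = (d+1)n$ together with $c_i \ge n$ forces $\sum_{i=1}^d (c_i - n) = n$. For each $i \in \{1,\dots,d\}$, pick an arbitrary subset $C_i' \subseteq C_i$ of exactly $n$ points, and let $E := X \setminus \bigcup_i C_i'$. Then $|E| = n$, and $C_1', \dots, C_d', E$ form $d+1$ pairwise disjoint subsets of $X$, each of size $n$, still in general position.

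Next I would apply the Akiyama--Alon theorem to these $d+1$ sets to obtain $n$ pairwise disjoint $(d+1)$-vertex simplices $T_1, \dots, T_n$. In each $T_j$, the $d$ vertices coming from $C_1', \dots, C_d'$ collectively realize each of the $d$ original colors exactly once, while the extra vertex comes from $E$ and has some original color $i_j$; hence $T_j$ is $d$-colorful (with color $i_j$ doubled). Because the $T_j$ are pairwise disjoint and together use all $(d+1)n = |X|$ points, each $T_j$ is automatically an island: any point of $X\setminus T_j$ lies in some $T_\ell$ with $\ell\ne j$, whose convex hull is disjoint from $\conv T_j$.

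The main obstacle in this approach is that the real work is outsourced to the Akiyama--Alon theorem. If a self-contained proof were preferred, the hard step would be to reprove an Akiyama--Alon-style statement directly, most naturally by induction on $n$: one would use the ham sandwich theorem applied to the $d$ color classes (with some care to handle parity issues that arise when the $c_i$ or $n$ are odd) to produce a hyperplane cutting $X$ into two sub-instances, each still satisfying the color-size hypothesis, and then recurse.
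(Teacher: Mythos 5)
Your reduction rests on a false statement. The theorem of Akiyama and Alon (Theorem~\ref{theorem_alon} above) concerns $d$ pairwise disjoint $n$-point classes in $\mathbb{R}^d$ and produces $n$ disjoint rainbow $(d-1)$-dimensional simplices; it does \emph{not} assert anything about $d+1$ classes in $\mathbb{R}^d$. The version you invoke --- $d+1$ disjoint $n$-point sets in general position in $\mathbb{R}^d$ always admit $n$ pairwise disjoint rainbow $d$-simplices --- fails already for $d=2$, $n=2$: take six points $p_1,\dots,p_6$ in convex position in this cyclic order, with classes $\{p_1,p_2\}$, $\{p_3,p_4\}$, $\{p_5,p_6\}$. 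Two triangles with vertices among six points in convex position are disjoint if and only if each vertex triple is a contiguous arc, and the only such partitions are $\{1,2,3\}\cup\{4,5,6\}$, $\{2,3,4\}\cup\{5,6,1\}$, $\{3,4,5\}\cup\{6,1,2\}$, none of which is rainbow. This configuration even arises inside your reduction (e.g.\ four points of color $1$ at positions $1,2,5,6$ and two of color $2$ at positions $3,4$, with $C_1'=\{p_1,p_2\}$ and $E=\{p_5,p_6\}$), so the step ``apply Akiyama--Alon to $C_1',\dots,C_d',E$'' can genuinely fail for an arbitrary choice of the $C_i'$. Since the target theorem is true for this point set, any repair would have to choose the sets $C_i'$ adaptively, and you give no mechanism for that; the freeness of that choice is where all the difficulty hides. (The parts of your argument that are fine are the bookkeeping $|E|=n$ and the observation that a partition of all of $X$ into sets with pairwise disjoint convex hulls automatically consists of islands.)

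Your closing remark --- induction on $n$ via the ham sandwich theorem applied to the $d$ color classes, ``with some care to handle parity issues'' --- is in fact the route the paper takes, but the parenthetical care is the entire content of the proof. The paper proves a special discrete ham sandwich theorem (Theorem~\ref{theorem_special_discrete_sandwich}): a hyperplane avoiding $X$ such that on each side the color classes remain balanced \emph{and} the number of points is a positive multiple of $d+1$. Obtaining this requires the Elton--Hill perturbation trick (assigning ball measures $1-\varepsilon_k(\mathbf{p})$ so that every bisecting hyperplane must meet the support of each measure), passing to a limit of ham sandwich cuts, and then a delicate redistribution of the at most $d$ boundary points, split into cases by the parity of $n$. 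None of this is routine, so as written the proposal neither proves the theorem nor correctly reduces it to a known result.
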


Both theorems can be seen as particular cases of the following common generalization:

\begin{conjecture}
\label{conj_main}
Let $d,k,m$ be integers satisfying $k,m\ge d\ge 2$. Let $X$ be an $m$-colored set of $kn$ points in general position in $\mathbb{R}^d$. Suppose that $X$ admits a partition into $n$ pairwise disjoint $d$-colorful $k$-tuples. Then $X$ spans $n$ pairwise disjoint $d$-colorful $k$-islands.
\end{conjecture}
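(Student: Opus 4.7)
My plan is to prove Conjecture~\ref{conj_main} by induction on $n$. The base case $n=1$ is immediate: by hypothesis the whole set $X$ is $d$-colorful, so $\conv X$ is the single $d$-colorful $k$-island required. For the inductive step, the strategy is to ``peel off'' one $d$-colorful $k$-island of $X$ by a hyperplane cut, leaving behind a point set to which the induction hypothesis applies.

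Concretely, I would look for an affine hyperplane $H$ and a closed halfspace $H^+$ bounded by $H$ with three properties: $|X \cap H^+| = k$; the set $X \cap H^+$ is $d$-colorful; and $X \setminus H^+$ still admits a partition into $n-1$ pairwise disjoint $d$-colorful $k$-tuples. Given such an $H$, the convex hull $\conv(X \cap H^+)$ is a $d$-colorful $k$-island separated by $H$ from the $n-1$ islands obtained by applying the induction hypothesis to $X \setminus H^+$, producing altogether $n$ pairwise disjoint $d$-colorful $k$-islands spanned by $X$. To construct $H$, I would parametrize generic oriented hyperplanes by their outer normal $v \in S^{d-1}$ together with the unique offset making $|X \cap H_v^+| = k$, and consider the resulting $k$-tuple $K(v)$. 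As $v$ varies, $K(v)$ is piecewise constant on a cell decomposition of $S^{d-1}$ and changes by single-element swaps across the walls. The two substantive conditions on $K(v)$ then carve out ``bad'' regions in this decomposition, and the aim would be a topological-degree or parity argument showing that these bad regions cannot exhaust the entire sphere.

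The main obstacle I anticipate is ensuring that the residual point set $X \setminus K(v)$ still admits a $d$-colorful partition into $k$-tuples. The combinatorial characterization of admissible color multisets is delicate: beyond the obvious ceiling constraints $n_i - k_i \le (n-1)(k-d+1)$, one likely needs Hall-type inequalities to guarantee existence of a perfect hypergraph matching into $d$-colorful $k$-tuples. In the special cases treated in Theorems~\ref{theorem_plane} and~\ref{theorem_Rd} these constraints collapse into only a few cases, which is presumably why those instances are already tractable. An alternative approach, should the hyperplane-peeling strategy prove too rigid, is to start directly from the hypothesized partition and use an exchange argument: whenever two tuples have overlapping convex hulls, swap points between them so as to strictly decrease a potential such as the total perimeter while preserving $d$-colorfulness. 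Defining a local swap that is both well-defined and monotone for arbitrary $k \ge d \ge 2$ strikes me as the genuinely hard part of either approach.
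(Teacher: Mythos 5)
You have set out to prove a statement that the paper itself presents only as Conjecture~\ref{conj_main}; the paper proves just the special cases $k\ge m=d=2$ (Theorem~\ref{theorem_plane}) and $k-1=m=d$ (Theorem~\ref{theorem_Rd}) and explicitly states in its final remarks that the conjecture remains open for $d\ge 3$ and $k,m\ge d+1$. So there is no proof in the paper to compare yours against, and what you have written is a plan with acknowledged gaps rather than a proof. The central step of your first route --- finding a single hyperplane that cuts off a $d$-colorful $k$-tuple whose complement still admits a $d$-colorful partition --- is known to fail. Already in the case $d=2$, $m=2$ the paper's proof of Theorem~\ref{theorem_plane} has to confront exactly this: the sign argument with $\sigma_a$ and $\sigma_{a+1}$ identifies situations where no halfplane achieves the required split, and the proof then falls back on the $3$-cutting theorem (Theorem~\ref{theorem_3cuts}), which produces a \emph{three}-part fan partition rather than a two-part hyperplane cut. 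Moreover, Figure~\ref{fig_discretization_counterexample2} of the paper is an explicit configuration for $d=3$, $k=m=4$ where a ham sandwich cut touches the supports of three measures but cannot be perturbed into any discrete balanced cut; this shows your ``bad regions'' on $S^{d-1}$ can in fact exhaust the sphere, so no degree or parity argument of the kind you envision can succeed without replacing the single hyperplane by a more elaborate partition device.

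Your second concern --- characterizing when the residual set still admits a partition into $d$-colorful $k$-tuples --- is answered combinatorially by the paper's Lemma~\ref{lemma_hall} (the Hall-type inequalities $\sum_{i\in I}|X_i|\le (k-d+t)n$ for $|I|=t\le d-1$), so that part is not the obstruction; the obstruction is realizing those inequalities geometrically on both sides of a cut. Your alternative exchange argument (swap points between overlapping tuples to decrease total perimeter) is essentially the Akiyama--Alon argument for $k=m=d$, where each part is a simplex and a crossing can always be uncrossed while preserving colors; for $k>d$ no monotone, colorfulness-preserving local swap is known, and you concede as much. As it stands, neither route closes, and the statement remains a conjecture.
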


The condition that $X$ admits a partition into $n$ pairwise disjoint $d$-colorful $k$-tuples can be stated equivalently as the following Hall-type condition on the sizes of the color classes.

\begin{lemma}
\label{lemma_hall}
Let $d,k,m$ be integers satisfying $k,m\ge d\ge 2$. Let $X$ be a set with $kn$ elements and let $X=X_1 \cup X_2 \cup \dots \cup X_m$ be an $m$-coloring of $X$. The set $X$ admits a partition into $n$ pairwise disjoint $d$-colorful $k$-tuples if and only if for every $t\in [d-1]$ and every subset $I\subset [m]$ with $|I|=t$ we have
\begin{equation}\label{eq_hall}
\sum_{i\in I} |X_i|\le (k-d+t)n.
\end{equation}
\end{lemma}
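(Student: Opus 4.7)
For the forward direction, fix $I\subseteq[m]$ with $|I|=t\le d-1$. In any partition into $d$-colorful $k$-tuples, each tuple has at least $d-t$ distinct colors outside $I$, and therefore at least $d-t$ elements outside $I$; equivalently, at most $k-d+t$ elements inside $I$. Summing over the $n$ tuples of the partition gives~\eqref{eq_hall}.

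For the backward direction, write $a_i=|X_i|$ and build the partition $T_1,\dots,T_n$ in two stages. First, pick for each $j\in[n]$ a ``skeleton'' $S_j\subseteq[m]$ of size exactly $d$ and place one element of every color in $S_j$ into $T_j$. Setting $n_i=|\{j:i\in S_j\}|$, there remain $a_i-n_i$ elements of color $i$ and a deficit of $k-d$ elements in each part. Second, distribute the remaining elements among the parts arbitrarily so that each part is topped up to size $k$. The second stage is a pure transportation problem, and since $\sum_i(a_i-n_i)=kn-dn=(k-d)n$ matches the total deficit and no per-(color, part) capacities are imposed, it is always solvable provided $k\ge d$. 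All the work is therefore in stage one.

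Producing skeletons $S_1,\dots,S_n$ with prescribed incidence counts $n_i$ is the same as realizing the $n_i$ as the column sums of an $n\times m$ zero-one matrix all of whose row sums equal $d$. Because the row sums are constant, the Gale--Ryser dominance conditions collapse to the single requirement $n_i\le n$ for every $i$. Hence it suffices to exhibit integers $0\le n_i\le\min(a_i,n)$ with $\sum_i n_i=dn$, which is possible if and only if $\sum_i\min(a_i,n)\ge dn$.

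The main (but mild) obstacle is to extract this last inequality from~\eqref{eq_hall}. Sort $a_1\ge a_2\ge\cdots\ge a_m$ and let $s=|\{i:a_i\ge n\}|$. If $s\ge d$, the top $d$ terms already contribute $dn$. If $s\le d-1$, then
\[
\sum_i\min(a_i,n)\;=\;sn+\sum_{i>s}a_i\;=\;sn+kn-\sum_{i=1}^s a_i\;\ge\;sn+kn-(k-d+s)n\;=\;dn,
\]
where the inequality applies~\eqref{eq_hall} with $I=\{1,\dots,s\}$ and $t=s$ (the case $s=0$ being vacuous). This completes the plan; no step beyond this bookkeeping appears to require a nontrivial argument.
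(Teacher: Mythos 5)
Your proposal is correct and follows essentially the same route as the paper: necessity by counting elements outside $I$ in each tuple, and sufficiency by reducing to the choice of at most $n$ elements per color totaling $dn$ (your bound $\sum_i\min(a_i,n)\ge dn$ is literally the paper's computation $|Y|\ge dn$), arranging them into $n$ rainbow $d$-tuples, and topping up arbitrarily. The only cosmetic difference is that you invoke Gale--Ryser for constant row sums where the paper gives the same fact constructively by filling an $n\times d$ grid column by column.
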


Lemma~\ref{lemma_hall} is purely combinatorial, and not geometric in nature. We provide its proof in Section~\ref{section_aux}, where in Lemma~\ref{lemma_merge} we also show how to reduce the problem in Conjecture~\ref{conj_main} to the case of $2d-1$ or $2d-2$ colors by merging some color classes.

Note that by Lemma~\ref{lemma_hall}, the conditions on the sizes of the color classes stated in Theorem~\ref{theorem_plane} and Theorem~\ref{theorem_Rd} are necessary for the existence of a partition into $n$ pairwise disjoint $d$-colorful $k$-tuples.

Theorem~\ref{theorem_plane} confirms Conjecture~\ref{conj_main} for $k\ge m=d=2$. This, together with Lemma~\ref{lemma_merge}, implies Conjecture~\ref{conj_main} for $d=2$ and arbitrary $k,m\ge 2$.
The proof of Theorem~\ref{theorem_plane} is elementary and is based on a result by Kaneko, Kano, and Suzuki~\cite{kanekoKanoSuzuki}. The proof is given in Section~\ref{section_planar}.

Theorem~\ref{theorem_Rd} confirms Conjecture~\ref{conj_main} for $k-1=m=d\ge 2$. The proof of Theorem~\ref{theorem_Rd} is based on the continuous ham sandwich theorem and a special discretization argument~\cite{EH11_stronger_conclusion,kano} and it is given in Section~\ref{section_simplices}.

\paragraph{Relation to previous results.}

The classical ham sandwich theorem states that for any $d$ measures in $\mathbb{R}^d$ there is a hyperplane bisecting each of these $d$ measures simultaneously. The theorem is often used in two versions: a continuous version with ``nice''  measures (see Theorem~\ref{theorem_ham_sandwich}) and a discrete version with discrete measures or point sets. The discrete ham sandwich theorem has been a source of influence for further developments related to a wide range of geometric partitioning results for discrete point configurations. The following result is a typical example.

\begin{theorem}[Akiyama--Alon~\cite{akyiama}]\label{theorem_alon}
Let $d\ge 2$ and $n\ge 1$ be integers, and let $X$ be a $d$-colored point set in general position in $\mathbb{R}^d$. Suppose that $|X| = dn$ and that there are exactly $n$ points in each color class. Then $X$ spans $n$ pairwise disjoint $d$-colorful $d$-islands. 
\end{theorem}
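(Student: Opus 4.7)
My plan is to proceed by induction on $n$, using the discrete ham sandwich theorem as the main splitting device. The base case $n=1$ is immediate: the $d$ points, one of each color, form a $(d-1)$-simplex by general position, which is the single required $d$-colorful $d$-island.

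For the inductive step with $n \ge 2$, I apply the ham sandwich theorem to the $d$ color classes $X_1, \dots, X_d$, obtaining a hyperplane $H$ that simultaneously bisects each class. By the general position assumption, $H$ contains at most $d$ points of $X$. The odd and even cases are then handled separately.

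When $n$ is odd, bisecting a class of odd size $n$ forces an odd (hence positive) number of points of each color to lie on $H$. Summing over all $d$ colors gives at least $d$ points on $H$, matching the general-position upper bound, so in fact exactly one point of each color lies on $H$, and these $d$ points form a $d$-colorful $(d-1)$-simplex $\Delta$. Each open half-space of $H$ then contains exactly $(n-1)/2$ points of each color, and applying the induction hypothesis to each side produces $(n-1)/2$ pairwise disjoint rainbow simplices there. Together with $\Delta$, these $n$ simplices are pairwise disjoint, since simplices arising from different halves (or from $H$) lie in disjoint closed regions of $\mathbb{R}^d$.

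When $n$ is even, the goal is to choose a bisecting hyperplane $H$ that avoids every point of $X$, so that each open half-space contains exactly $n/2$ points of each color; induction on each side then yields the required $n$ rainbow simplices. This is the main technical obstacle: the hyperplane returned by the ham sandwich theorem may contain an even positive number of points of some color, and a naive perturbation breaks the bisection balance (moving a single point off $H$ changes the two side-counts asymmetrically). One remedy is a continuity argument: replace each point with a small $\epsilon$-ball, apply the continuous ham sandwich theorem to obtain a bisecting hyperplane generic with respect to the balls, and take $\epsilon \to 0$ along a combinatorially stable sequence. Equivalently, one may observe that within the set of bisecting hyperplanes a generic member misses all points, as a consequence of general position. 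Once such an $H$ is fixed, the induction concludes exactly as in the odd case.
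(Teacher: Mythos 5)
First, note that the paper does not prove Theorem~\ref{theorem_alon} at all: it is quoted as a known result of Akiyama and Alon, so there is no in-paper proof to compare you against. Your strategy --- induction on $n$ driven by the discrete ham sandwich theorem --- is the classical route and matches the paper's own remark that the case $m=d$ follows ``relatively easily'' from the discrete ham sandwich theorem when $n$ is a power of $2$, as well as the machinery of Theorem~\ref{theorem_special_discrete_sandwich}. Your odd case is complete and correct: each open halfspace contains at most $(n-1)/2$ points of each color, so the bisecting hyperplane contains at least one point of each color; general position caps the number of points on a hyperplane at $d$, so it contains exactly one of each; these $d$ points form a $d$-colorful island (being all of $X\cap H$), and each open halfspace inherits the hypothesis with $(n-1)/2$ points per color.

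The even case, however, is a genuine gap: you name two possible remedies but carry out neither, and the second is circular. The claim that ``a generic member of the set of bisecting hyperplanes misses all points'' presupposes that a hyperplane with exactly $n/2$ points of each color strictly on each side exists, which is precisely what must be proved; the set of bisecting hyperplanes is not a space on which genericity arguments apply for free. The $\epsilon$-ball limit does not finish the job either: in the limit the hyperplane may again pass through up to $d$ points of $X$, and one still has to decide how to distribute them. What actually closes the gap is the rounding/tilting step that the paper performs explicitly in the proof of Theorem~\ref{theorem_special_discrete_sandwich}. Concretely: take any discrete ham sandwich hyperplane $h$, set $c_i=|h\cap X_i|$ and let $a_i,b_i\le n/2$ be the two open-halfspace counts for $X_i$. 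Since $a_i+b_i+c_i=n$, the quantities $n/2-a_i$ and $n/2-b_i$ are nonnegative and sum to $c_i$, so there is a feasible assignment of the points of $h\cap X_i$ to the two sides making both counts equal to $n/2$. Because $|h\cap X|\le d$ and these points are affinely independent by general position, the evaluation map from affine functionals on $\mathbb{R}^d$ to $\mathbb{R}^{|h\cap X|}$ is surjective, so a hyperplane arbitrarily close to $h$ realizes any prescribed sign pattern on $h\cap X$ while leaving every other point of $X$ on its original side. Adding this paragraph makes your induction work for every $n$, not only for powers of $2$.
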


The planar case of Theorem~\ref{theorem_alon} has the following generalization, conjectured by Kaneko and Kano~\cite{kaneko}, and proven independently by Bespamyatnikh et al.~\cite{besp}, Ito et al.~\cite{IUY00_2d} and Sakai~\cite{sakai}.

\begin{theorem}[Bespamyatnikh et al.~\cite{besp}, Ito et al.~\cite{IUY00_2d}, Sakai~\cite{sakai}]\label{theorem_sakai}
Let $A$ and $B$ be disjoint finite sets in $\mathbb{R}^2$ such that $A\cup B$ is in general position, $|A| = an$, and $|B|= bn$. Then there exist $n$ pairwise disjoint convex sets $C_1, C_2, \dots, C_n$ such that $|C_i\cap A| = a$ and $|C_i\cap B| = b$ for every $i\in[n]$.
\end{theorem}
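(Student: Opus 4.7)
The plan is to proceed by induction on $n$, reducing each step to a single balanced-cut lemma. The base case $n = 1$ is immediate: take $C_1 := \conv(A \cup B)$, which contains exactly $a$ points of $A$ and $b$ of $B$.

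For the inductive step ($n \ge 2$), I seek a line $\ell$ and an integer $p$ with $1 \le p \le n-1$ such that one open halfplane $H_L$ bounded by $\ell$ contains exactly $pa$ points of $A$ and $pb$ of $B$, while the opposite halfplane $H_R$ contains $(n-p)a$ and $(n-p)b$. Given such a cut, apply the inductive hypothesis separately with parameters $(a, b, p)$ on $(A \cup B) \cap H_L$ and $(a, b, n-p)$ on $(A \cup B) \cap H_R$. The two resulting families of convex sets together yield the required $n$ convex sets; they are automatically pairwise disjoint because they lie in separated halfplanes.

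The bulk of the work is producing this balanced cut. I would combine the continuous ham sandwich theorem with a discrete sweep. Spread each point of $A$ into a tiny uniform disk of mass $1/n$ and each point of $B$ into a tiny disk of mass $1/n$, obtaining two absolutely continuous planar measures of total masses $a$ and $b$. A simultaneously bisecting line gives exactly the ratio $a : b$ of the counts on each side; when $n$ is even this is already the cut with $p = n/2$. For odd $n$, or to locate an integer split elsewhere, I parameterize a family of oriented lines near the bisector and track the integer functional $F(\ell) := b \cdot |A \cap H_L(\ell)| - a \cdot |B \cap H_L(\ell)|$, whose vanishing is equivalent to $(|A \cap H_L|, |B \cap H_L|)$ being of the form $(pa, pb)$. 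In general position, as $\ell$ rotates or translates, points cross it one at a time, so $F$ changes in unit jumps of $\pm b$ (when an $A$-point crosses) or $\mp a$ (when a $B$-point crosses).

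The main obstacle is arguing that $F(\ell) = 0$ for some intermediate line with $p \in \{1, \dots, n-1\}$: since $F$ can jump by up to $\max(a, b)$ per event, a direct discrete intermediate-value argument is insufficient. My remedy would be to vary simultaneously the direction of $\ell$ and the pivot vertex among $\conv(A \cup B)$, then use a parity argument — for example, tracking $F$ modulo $a + b$ as the line rotates fully and comparing with an initial configuration in which $F = 0$ trivially — to force a lattice point $(pa, pb)$ to be attained by $(|A \cap H_L|, |B \cap H_L|)$. Once the balanced line is established, the induction closes immediately.
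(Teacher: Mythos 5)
There is a genuine gap at the heart of your argument: the balanced-cut lemma that carries your whole induction is false. It is not true in general that there exist a line $\ell$ and an integer $p$ with $1\le p\le n-1$ such that one open halfplane contains exactly $pa$ points of $A$ and exactly $pb$ points of $B$. Concretely, take $a=1$, $b=10$, $n=3$; place the three points of $A$ in a cluster of diameter $\varepsilon$ at the origin and the thirty points of $B$ evenly spaced on a unit circle centered at the origin (perturbed into general position). Any line having at least one point of $A$ strictly on each side must pass through the cluster, hence within distance $\varepsilon$ of the center of the circle, and for small $\varepsilon$ such a line leaves between $14$ and $16$ points of $B$ on each side --- never the required $10$ or $20$. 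So no sweep, rotation of direction, choice of pivot, or parity bookkeeping can produce the cut: your functional $F$ simply has no zero with $1\le p\le n-1$ for this configuration. (Your remark that the ham sandwich bisection handles even $n$ is fine, but the induction inevitably lands on odd values $n\ge 3$, where the obstruction above bites.)

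This obstruction is precisely why all the known proofs of the statement (Bespamyatnikh et al., Ito et al., Sakai) must sometimes partition into \emph{three} convex pieces rather than two. The correct dichotomy is: either some open halfplane achieves counts $(pa,pb)$ for some $p$, in which case one recurses on the two sides exactly as you propose; or, for every $p$, every open halfplane with $pa$ points of $A$ has too few (equivalently, by a continuity argument as in Observation~\ref{obs_signwell}, always too few or always too many) points of $B$ --- and this failure is exactly the hypothesis of the $3$-cutting theorem (Theorem~\ref{theorem_3cuts}), which then supplies a $3$-fan partition into three balanced convex parts on which one recurses. Note that the paper does not prove Theorem~\ref{theorem_sakai} itself (it is quoted from the literature), but its proof of the more general Theorem~\ref{theorem_plane} follows exactly this two-pronged scheme, and your proposal is missing the entire second prong.
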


Theorem~\ref{theorem_sakai} solves the case of Theorem~\ref{theorem_plane} when the size of each color class is divisible by $n$. There is also a continuous version of Theorem~\ref{theorem_sakai} due to Sakai~\cite{sakai}, which was generalized to arbitrary dimension by Sober{\'o}n~\cite{soberon}. Sober{\'o}n's proof of the continuous version relies on an ingenious application of power diagrams and Dold's theorem. Even further generalizations were obtained by Karasev et al.~\cite{spicy} and independently by Blagojevi{\'c} and Ziegler~\cite{blagojevic}. However, going from the continuous version to the discrete version seems to require, in many cases, a non-trivial approximation argument, and we do not see how the continuous results~\cite{blagojevic,spicy,soberon} could be used to settle our Conjecture~\ref{conj_main} for the case $m=d$.

In the discrete setting, it is natural to try to relax the divisibility condition on the sets $|A|$ and $|B|$ in Theorem~\ref{theorem_sakai}, and some partial results were obtained in~\cite{KK05_semibalanced,kanekoKanoSuzuki,KU07_general,KU10_balanced}.

Another recent example in this direction is the following generalization of Theorem~\ref{theorem_alon}, due to Kano and Kyn{\v c}l~\cite{kano}.

\begin{theorem}[Kano--Kyn{\v c}l,~\cite{kano}]\label{theorem_kyncl}
Let $d\ge 2$ and $n\ge 1$ be integers, and let $X$ be a $(d+1)$-colored point set in general position in $\mathbb{R}^d$. Suppose that $|X| = dn$ and that there are at most $n$ points in each color class. Then $X$ spans $n$ pairwise disjoint $d$-colorful $d$-islands. 
\end{theorem}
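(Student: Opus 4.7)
The plan is to induct on $n$. The base case $n=1$ is immediate: the hypothesis provides $d$ points with pairwise distinct colors in general position, which already form a single $d$-colorful $(d-1)$-simplex.

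For the inductive step with $n \ge 2$, I would look for a hyperplane $H$ (avoiding $X$) together with a decomposition $n = n_1 + n_2$, $n_1, n_2 \ge 1$, such that, writing $L, R$ for the two open half-spaces defined by $H$, one has $|L \cap X| = d n_1$, $|R \cap X| = d n_2$, and $|L \cap X_i| \le n_1$, $|R \cap X_i| \le n_2$ for every color class $X_i$. Given such an $H$, applying the induction hypothesis on each side produces $n_1 + n_2 = n$ pairwise disjoint $d$-colorful $(d-1)$-simplices; the two subfamilies remain mutually disjoint because they lie in opposite open half-spaces of $H$.

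To construct $H$, I would use the continuous ham sandwich theorem. The hypotheses $\sum_i |X_i| = dn$, $|X_i| \le n$, and the presence of $d+1$ color classes force at least one class, call it $X_{c^*}$, to satisfy $|X_{c^*}| < n$ (otherwise the sum would be $(d+1)n$). I would apply the continuous ham sandwich to the remaining $d$ classes $\{X_i : i \ne c^*\}$ after replacing each point by a sufficiently small ball so as to obtain $d$ honest continuous measures in $\mathbb{R}^d$. This yields a hyperplane $H_0$ simultaneously bisecting these $d$ measures; for small enough balls, $H_0$ avoids all of $X$, and the standard discretization/perturbation reasoning in the spirit of~\cite{EH11_stronger_conclusion} then implies that $H_0$ splits each $X_i$ with $i \ne c^*$ as evenly as possible.

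The main obstacle is to control the split of the remaining class $X_{c^*}$, which the ham sandwich construction leaves untouched and which could a priori land almost entirely on one side of $H_0$. My plan to remedy this is to work with a one-parameter family of ham-sandwich hyperplanes, obtained for instance by continuously rescaling the total mass assigned to one of the first $d$ color measures; as the parameter varies, the bisecting hyperplane sweeps across $\mathbb{R}^d$ in a controlled way and the quantity $|L \cap X_{c^*}|$ changes by at most one at each combinatorial event. An intermediate-value argument then selects a parameter value at which all $d+1$ color-class constraints hold simultaneously for some admissible decomposition $n = n_1 + n_2$. Carrying out the parity bookkeeping carefully — particularly when several classes achieve size exactly $n$ and force the asymmetric split $n_1 \ne n_2$ — is precisely where the ``special discretization argument'' alluded to in the paper's introduction really does the work.
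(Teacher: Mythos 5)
First, a point of orientation: this paper does not prove Theorem~\ref{theorem_kyncl}; it quotes it from~\cite{kano}, where it is deduced from the \emph{hamburger theorem}, a nontrivial generalization of the ham sandwich theorem to $d+1$ measures in $\mathbb{R}^d$. The paper itself flags the relevant contrast at the start of Section~\ref{section_simplices}: the $d$-color case (Theorem~\ref{theorem_Rd}) is easier precisely because one ``can use directly the continuous ham sandwich theorem, instead of its generalization.'' Your inductive skeleton is the right one and matches both~\cite{kano} and the proof of Theorem~\ref{theorem_special_discrete_sandwich}: find a hyperplane splitting $X$ into $dn_1$ and $dn_2$ points with $|L\cap X_i|\le n_1$ and $|R\cap X_i|\le n_2$ for every class, recurse on each side, and note that islands spanned inside an open halfspace are islands spanned by $X$. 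The entire difficulty, and the gap in your proposal, is in producing that hyperplane when there are $d+1$ classes to control but only $d$ measures can be bisected.

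Your mechanism for controlling the leftover class $X_{c^*}$ does not work as described, for two concrete reasons. First, the proposed one-parameter family is vacuous: a hyperplane bisects the measure $t\mu_i$ if and only if it bisects $\mu_i$, so ``continuously rescaling the total mass assigned to one of the first $d$ color measures'' does not move the set of simultaneous bisectors at all. Second, even if you replace this by a genuine deformation (say, interpolating one of the $d$ measures towards $\mu_{c^*}$), Theorem~\ref{theorem_ham_sandwich} only asserts the \emph{existence} of a bisecting hyperplane for each parameter value; it provides no continuous selection, and the solution set over the parameter interval need not contain a path joining the two endpoints. Hence the claim that the hyperplane ``sweeps across $\mathbb{R}^d$ in a controlled way'' and the ensuing intermediate-value argument are unjustified --- establishing the required connectivity (or working with the degree of the whole solution space) is exactly the topological content of the hamburger theorem in~\cite{kano}, not a corollary of the plain ham sandwich theorem. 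Until you either prove such a connectivity statement or invoke the hamburger theorem as a black box, the proof is incomplete; the subsequent rounding and parity bookkeeping (divisibility of each side's total by $d$, the asymmetric split when some classes have exactly $n$ points) is also substantive, though there at least the Elton--Hill perturbation~\cite{EH11_stronger_conclusion} and the two-case analysis in the proof of Theorem~\ref{theorem_special_discrete_sandwich} give you a template to follow.
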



Note that by Lemma~\ref{lemma_hall}, the conditions on the sizes of the color classes stated in Theorem~\ref{theorem_alon} and Theorem~\ref{theorem_kyncl} are necessary for the existence of a partition into $n$ pairwise disjoint $d$-colorful $d$-tuples.

Theorem~\ref{theorem_alon} proves the case $k=m=d$ of Conjecture~\ref{conj_main}, while the case $k=m-1=d$ is answered by Theorem~\ref{theorem_kyncl}. 
The case $k=d$ and $m\ge d$ was originally conjectured by Kano and Suzuki~\cite[Conjecture 3]{kano}. The case $m\ge k=d=2$ was proved by Aichholzer et al.~\cite
{aichholzer} and by Kano, Suzuki and Uno~\cite{kanoSuzukiUno}.

In this note we are mostly concerned with the case $k\ge m=d$. For $d=2$, Theorem~\ref{theorem_sakai} covers the subcase where the cardinality of each $X_i$ is divisible by $n$. Kaneko, Kano, and Suzuki~\cite{kanekoKanoSuzuki} solved the subcase with $d=2$, $k$ odd and $\big\lvert |X_1| - |X_2| \big\rvert \le n$.

When $n$ is a power of $2$, the case $m=d$ of Conjecture~\ref{conj_main} can be obtained relatively easily by induction from the discrete ham sandwich theorem~\cite[Theorem 1.4.3]{M02_lectures}, proceeding like in Case 1 in the proof of Theorem~\ref{theorem_special_discrete_sandwich} in Section~\ref{section_simplices}. Thus the main contribution of this paper and the main difficulty in it consists in removing the divisibility assumptions for $n$ and the sizes of the color classes.

\section{Auxiliary results}
\label{section_aux}

\begin{proof}[{Proof of Lemma~\ref{lemma_hall}}]
Without loss of generality, we assume that $|X_1|\ge|X_2|\ge \dots\ge|X_m|$. Condition~\eqref{eq_hall} can then be stated using only $d-1$ inequalities as follows:
\begin{equation}\label{eq_hall_linear}
\forall t\in [d-1] \quad \sum_{i=1}^t |X_i|\le (k-d+t)n.
\end{equation}

The necessity of condition~\eqref{eq_hall_linear} follows from the fact that for every $t\in [d]$, every $d$-colorful $k$-tuple has at most $k-d+t$ elements in $X_1 \cup X_2 \cup \dots \cup X_t$, since it has at least $d-t$ elements in $X_{t+1} \cup X_{t+2}\cup \dots \cup X_m$.

We now prove the sufficiency of~\eqref{eq_hall_linear}. If $|X_1|=n$, then let $t=0$. Otherwise let $t\in[m]$ be the largest index such that $|X_t|>n$. For each $i\in[d]$, let $Y_i$ be an arbitrary $n$-element subset of $X_i$ if $i\le t$, otherwise let $Y_i=X_i$. Let $Y=\bigcup_{i=1}^m Y_i$; see Figure~\ref{fig:skyscraper}. We claim that $|Y|\ge dn$.
Indeed, by condition~\eqref{eq_hall_linear} and by the assumptions that $|X|=kn$ and $k\ge d$ (which may be regarded as condition~\eqref{eq_hall_linear} for $t=0$), we have
\[
|Y|=|X|-|X\setminus Y|=
kn-\sum_{i=1}^t (|X_i|-n) = kn+tn - \sum_{i=1}^t |X_i|\ge dn.
\]

We construct a partition of $X$ into $n$ $d$-colorful $k$-tuples as follows. First we take a subset $Y'$ of exactly $dn$ elements of $Y$ and partition it into $n$ $d$-colorful $d$-tuples, by filling the elements of $Y'$ into an $n \times d$ grid, column by column, starting with the elements of $Y_1 \cap Y'$, then the elements of $Y_2 \cap Y'$, and so on. The rows of the grid then form the desired partition: since each $Y_i \cap Y$ has at most $n$ elements, no two elements in the same row get
the same color, and hence the $d$-tuple in each row is $d$-colorful.

Finally, we extend each $d$-tuple of the partition to a $k$-tuple by adding arbitrary $k-d$ elements of the remaining set $X\setminus Y'$. The resulting $k$-tuples are automatically $d$-colorful.
\end{proof}


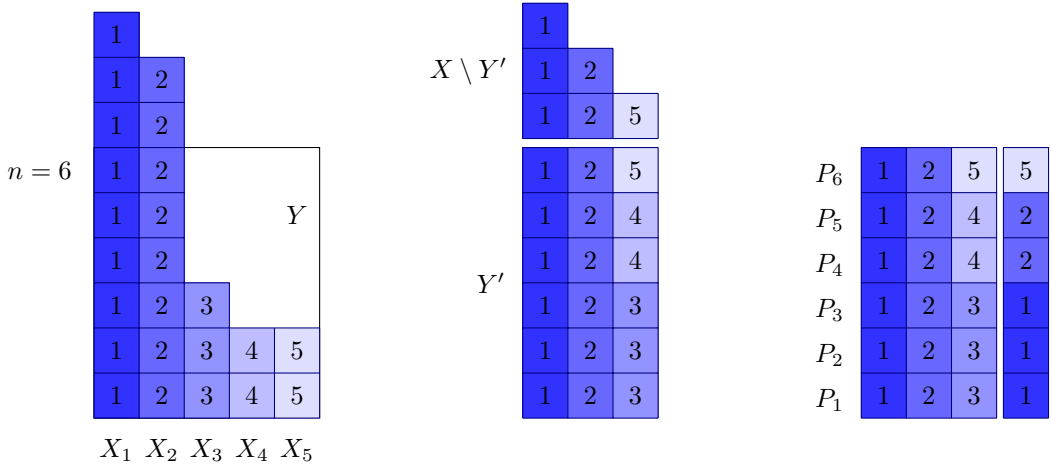
\begin{figure}
\centering
\begin{tikzpicture}

\begin{scope}[scale=.6]
\draw (0,0) -- (5,0) -- (5,6) --(0,6) -- cycle;

\fill[blue!95!white, opacity = .85] (0,0)--(1,0)--(1,9)--(0,9)--cycle;
\fill[blue!70!white, opacity = .85] (1,0)--(2,0)--(2,8)--(1,8)--cycle;
\fill[blue!50!white, opacity = .85] (2,0)--(3,0)--(3,3)--(2,3)--cycle;
\fill[blue!30!white, opacity = .85] (3,0)--(4,0)--(4,2)--(3,2)--cycle;
\fill[blue!15!white, opacity = .85] (4,0)--(5,0)--(5,2)--(4,2)--cycle;
\draw[thin, blue!50!black] 
(0,0)--(5,0)
(0,1)--(5,1)
(0,2)--(5,2)
(0,3)--(3,3)
(0,4)--(2,4)
(0,5)--(2,5)
(0,6)--(2,6)
(0,7)--(2,7)
(0,8)--(2,8)
(0,9)--(1,9)
(0,0)--(0,9)
(1,0)--(1,9)
(2,0)--(2,8)
(3,0)--(3,3)
(4,0)--(4,2)
(5,0)--(5,2);
\node at (0.5,-.7){\footnotesize $X_1$};
\node at (1.5,-.7){\footnotesize $X_2$};
\node at (2.5,-.7){\footnotesize $X_3$};
\node at (3.5,-.7){\footnotesize $X_4$};
\node at (4.5,-.7){\footnotesize $X_5$};

\node at (-1.2,5.5) {\footnotesize $n=6$};
\node at (4.5,4.5) {\footnotesize $Y$};
\node at (.5,0.5) {\footnotesize $1$};
\node at (.5,1.5) {\footnotesize $1$};
\node at (.5,2.5) {\footnotesize $1$};
\node at (.5,3.5) {\footnotesize $1$};
\node at (.5,4.5) {\footnotesize $1$};
\node at (.5,5.5) {\footnotesize $1$};
\node at (.5,6.5) {\footnotesize $1$};
\node at (.5,7.5) {\footnotesize $1$};
\node at (.5,8.5) {\footnotesize $1$};
\node at (1.5,0.5) {\footnotesize $2$};
\node at (1.5,1.5) {\footnotesize $2$};
\node at (1.5,2.5) {\footnotesize $2$};
\node at (1.5,3.5) {\footnotesize $2$};
\node at (1.5,4.5) {\footnotesize $2$};
\node at (1.5,5.5) {\footnotesize $2$};
\node at (1.5,6.5) {\footnotesize $2$};
\node at (1.5,7.5) {\footnotesize $2$};
\node at (2.5,0.5) {\footnotesize $3$};
\node at (2.5,1.5) {\footnotesize $3$};
\node at (2.5,2.5) {\footnotesize $3$};
\node at (3.5,0.5) {\footnotesize $4$};
\node at (3.5,1.5) {\footnotesize $4$};
\node at (4.5,0.5) {\footnotesize $5$};
\node at (4.5,1.5) {\footnotesize $5$};
\end{scope}

\begin{scope}[scale=.6, xshift = 9.5cm]
\fill[blue!95!white, opacity = .85] (0,0)--(1,0)--(1,6)--(0,6)--cycle;
\fill[blue!70!white, opacity = .85] (1,0)--(2,0)--(2,6)--(1,6)--cycle;
\fill[blue!50!white, opacity = .85] (2,0)--(3,0)--(3,3)--(2,3)--cycle;
\fill[blue!30!white, opacity = .85] (2,3)--(3,3)--(3,5)--(2,5)--cycle;
\fill[blue!15!white, opacity = .85] (2,5)--(3,5)--(3,6)--(2,6)--cycle;
\draw[thin, blue!50!black] 
(0,0)--(3,0)
(0,1)--(3,1)
(0,2)--(3,2)
(0,3)--(3,3)
(0,4)--(3,4)
(0,5)--(3,5)
(0,6)--(3,6)
(0,0)--(0,6)
(1,0)--(1,6)
(2,0)--(2,6)
(3,0)--(3,6);

\node at (.5,0.5) {\footnotesize $1$};
\node at (.5,1.5) {\footnotesize $1$};
\node at (.5,2.5) {\footnotesize $1$};
\node at (.5,3.5) {\footnotesize $1$};
\node at (.5,4.5) {\footnotesize $1$};
\node at (.5,5.5) {\footnotesize $1$};
\node at (1.5,0.5) {\footnotesize $2$};
\node at (1.5,1.5) {\footnotesize $2$};
\node at (1.5,2.5) {\footnotesize $2$};
\node at (1.5,3.5) {\footnotesize $2$};
\node at (1.5,4.5) {\footnotesize $2$};
\node at (1.5,5.5) {\footnotesize $2$};
\node at (2.5,0.5) {\footnotesize $3$};
\node at (2.5,1.5) {\footnotesize $3$};
\node at (2.5,2.5) {\footnotesize $3$};
\node at (2.5,3.5) {\footnotesize $4$};
\node at (2.5,4.5) {\footnotesize $4$};
\node at (2.5,5.5) {\footnotesize $5$};
\node at (-.7,3) {\footnotesize $Y'$};

\end{scope}

\begin{scope}[scale=.6, xshift = 9.5cm, yshift=.2cm]

\fill[blue!95!white, opacity = .85] (0,6)--(1,6)--(1,9)--(0,9)--cycle;
\fill[blue!70!white, opacity = .85] (1,6)--(2,6)--(2,8)--(1,8)--cycle;
\fill[blue!15!white, opacity = .85] (2,6)--(3,6)--(3,7)--(2,7)--cycle;
\draw[thin, blue!50!black] 
(0,6)--(3,6) 
(0,7)--(3,7) 
(0,8)--(2,8)
(0,9)--(1,9)
(0,6)--(0,9)
(1,6)--(1,9)
(2,6)--(2,8)
(3,6)--(3,7);
\node at (0.5,6.5) {\footnotesize $1$};
\node at (0.5,7.5) {\footnotesize $1$};
\node at (0.5,8.5) {\footnotesize $1$};
\node at (1.5,6.5) {\footnotesize $2$};
\node at (1.5,7.5) {\footnotesize $2$};
\node at (2.5,6.5) {\footnotesize $5$};
\node at (-1.2,7.5) {\footnotesize $X\setminus  Y'$};

\end{scope}

\begin{scope}[scale=.6, xshift = 17cm]
\fill[blue!95!white, opacity = .85] (0,0)--(1,0)--(1,6)--(0,6)--cycle;
\fill[blue!70!white, opacity = .85] (1,0)--(2,0)--(2,6)--(1,6)--cycle;
\fill[blue!50!white, opacity = .85] (2,0)--(3,0)--(3,3)--(2,3)--cycle;
\fill[blue!30!white, opacity = .85] (2,3)--(3,3)--(3,5)--(2,5)--cycle;
\fill[blue!15!white, opacity = .85] (2,5)--(3,5)--(3,6)--(2,6)--cycle;
\draw[thin, blue!50!black] 
(0,0)--(3,0)
(0,1)--(3,1)
(0,2)--(3,2)
(0,3)--(3,3)
(0,4)--(3,4)
(0,5)--(3,5)
(0,6)--(3,6)
(0,0)--(0,6)
(1,0)--(1,6)
(2,0)--(2,6)
(3,0)--(3,6);
\node at (-.7,0.4) {\footnotesize $P_1$};
\node at (-.7,1.4) {\footnotesize $P_2$};
\node at (-.7,2.4) {\footnotesize $P_3$};
\node at (-.7,3.4) {\footnotesize $P_4$};
\node at (-.7,4.4) {\footnotesize $P_5$};
\node at (-.7,5.4) {\footnotesize $P_6$};

\node at (.5,0.5) {\footnotesize $1$};
\node at (.5,1.5) {\footnotesize $1$};
\node at (.5,2.5) {\footnotesize $1$};
\node at (.5,3.5) {\footnotesize $1$};
\node at (.5,4.5) {\footnotesize $1$};
\node at (.5,5.5) {\footnotesize $1$};
\node at (1.5,0.5) {\footnotesize $2$};
\node at (1.5,1.5) {\footnotesize $2$};
\node at (1.5,2.5) {\footnotesize $2$};
\node at (1.5,3.5) {\footnotesize $2$};
\node at (1.5,4.5) {\footnotesize $2$};
\node at (1.5,5.5) {\footnotesize $2$};
\node at (2.5,0.5) {\footnotesize $3$};
\node at (2.5,1.5) {\footnotesize $3$};
\node at (2.5,2.5) {\footnotesize $3$};
\node at (2.5,3.5) {\footnotesize $4$};
\node at (2.5,4.5) {\footnotesize $4$};
\node at (2.5,5.5) {\footnotesize $5$};
\end{scope}

\begin{scope}[scale=.6, xshift = 20.15cm]
\fill[blue!95!white, opacity = .85] (0,0)--(1,0)--(1,3)--(0,3)--cycle;
\fill[blue!70!white, opacity = .85] (0,3)--(1,3)--(1,5)--(0,5)--cycle;
\fill[blue!15!white, opacity = .85] (0,5)--(1,5)--(1,6)--(0,6)--cycle;
\draw[thin, blue!50!black] 
(0,0)--(1,0)
(0,1)--(1,1)
(0,2)--(1,2)
(0,3)--(1,3)
(0,4)--(1,4)
(0,5)--(1,5)
(0,6)--(1,6)
(0,0)--(0,6)
(1,0)--(1,6);

\node at (.5,0.5) {\footnotesize $1$};
\node at (.5,1.5) {\footnotesize $1$};
\node at (.5,2.5) {\footnotesize $1$};
\node at (.5,3.5) {\footnotesize $2$};
\node at (.5,4.5) {\footnotesize $2$};
\node at (.5,5.5) {\footnotesize $5$};

\end{scope}

\end{tikzpicture}
\caption{Partition into parts $P_1, \dots, P_6$ with parameters $k=4$, $n=6$, $m=5$, and $d=3$. \label{fig:skyscraper}}
\end{figure}

\subsubsection*{Merging colors}

In order to prove Conjecture~\ref{conj_main} for $k=d$, it is enough to prove it for $m=2d-1$~\cite{kano}. Indeed, if the number of color classes is larger, we can merge two classes of small size into a single class of size at most $n$, which implies that condition~\eqref{eq_hall} is still satisfied.

The next lemma implies that to prove Conjecture~\ref{conj_main} for $k>d$, it is enough to prove it for $m=2d-2$.

\begin{lemma}
\label{lemma_merge}
Let $d,k,m$ be integers satisfying $d \ge 2$, $k\ge d+1$, and $m\ge 2d-1$. Let $X$ be a set with $kn$ elements and let $X=X_1 \cup X_2 \cup \dots \cup X_m$ be an $m$-coloring of $X$ satisfying condition~\eqref{eq_hall}. There exist two color classes such that by merging them into a single color class, the resulting $(m-1)$-coloring of $X$ still satisfies condition~\eqref{eq_hall}.
\end{lemma}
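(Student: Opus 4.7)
The plan is to show that merging the two smallest color classes always preserves condition~\eqref{eq_hall}. Sort so that $\alpha_1 := |X_1| \ge \alpha_2 := |X_2| \ge \cdots \ge \alpha_m := |X_m|$, write $\sigma_t = \sum_{i=1}^t \alpha_i$, and set $\delta_t = (k-d+t)n - \sigma_t$. By the linearized form of~\eqref{eq_hall} established in the proof of Lemma~\ref{lemma_hall}, each $\delta_t \ge 0$ for $t \in [d-1]$; moreover $\delta_0 = (k-d)n \ge n$ since $k \ge d+1$.

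First I would translate the merging requirement. Put $s = \alpha_{m-1} + \alpha_m$. The new coloring has (unsorted) sizes $\alpha_1, \dots, \alpha_{m-2}, s$, so the top-$t$ sum of the new coloring equals $\max(\sigma_t,\, \sigma_{t-1} + s)$. The $\sigma_t$ branch is already bounded by the original condition, so preservation reduces to $\sigma_{t-1} + s \le (k-d+t)n$ for every $t \in [d-1]$, equivalently
\[
s \le n + \delta_{t^*}, \qquad \text{where } t^* \in \{0, 1, \dots, d-2\} \text{ minimizes } \delta_{t'}.
\]

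Next I would average over the $m - t^*$ tail classes $\alpha_{t^*+1}, \dots, \alpha_m$, whose total size is $kn - \sigma_{t^*} = (d-t^*)n + \delta_{t^*}$. Since $\alpha_{m-1}$ and $\alpha_m$ are the two smallest among them, the elementary fact that the two smallest of $N$ nonnegative reals sum to at most $2/N$ of the total gives
\[
s \le \frac{2\bigl((d-t^*)n + \delta_{t^*}\bigr)}{m - t^*}.
\]
It therefore suffices to verify $2\bigl((d-t^*)n + \delta_{t^*}\bigr) \le (m - t^*)(n + \delta_{t^*})$, which after rearrangement becomes
\[
(m + t^* - 2d)\,n \;+\; (m - t^* - 2)\,\delta_{t^*} \;\ge\; 0.
\]

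This splits into two easy cases. If $t^* \ge 1$, then $m \ge 2d - 1$ gives $m + t^* - 2d \ge 0$ and $t^* \le d - 2$ gives $m - t^* - 2 \ge d - 1 \ge 1$, so both terms are nonnegative. If $t^* = 0$, I would use $\delta_0 \ge n$ to bound the sum below by $(m-2d)n + (m-2)n = (2m - 2d - 2)n$, which is nonnegative since $m \ge 2d - 1 \ge d + 1$ for $d \ge 2$. The main (rather mild) obstacle is choosing the right averaging set; once one zooms in on $\{\alpha_{t^*+1}, \dots, \alpha_m\}$, whose length is at least $m - (d-2) \ge d + 1$, the two smallest are automatically small enough to be absorbed by $n + \delta_{t^*}$, and the argument reduces to the short arithmetic check above.
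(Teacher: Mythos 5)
Your proposal is correct and follows essentially the same route as the paper: merge the two smallest color classes, bound their combined size by averaging over a tail of the sorted sequence (the two smallest of $N$ values sum to at most $2/N$ of their total), and reduce to a short arithmetic inequality using $m\ge 2d-1$ and $k\ge d+1$. The only difference is organizational — you argue directly via the minimal slack $\delta_{t^*}$, while the paper argues by contradiction from a minimal violating index $t$ (your $t^*$ plays the role of the paper's $t-1$) — so no further comparison is needed.
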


\begin{proof}
Assume that $|X_1|\ge|X_2|\ge \dots\ge|X_m|$. We merge $X_{m-1}$ and $X_{m}$ into a single class $X'_{m-1}$. Since $m\ge 2d-1$, we have $|X'_{m-1}|=|X_{m-1}|+|X_m|\le 2kn/(2d-1)$. We now verify that the $(m-1)$-coloring $\chi'=(X_1, X_2, \dots, X_{m-2}, X'_{m-1})$ satisfies~(\ref{eq_hall}).

Suppose that $t\in [d-1]$ is the smallest integer for which $\chi'$ violates condition~\eqref{eq_hall}. The inequality $2kn/(2d-1) \le (k-d+1)n$ follows from $(2d-3)(k-d)\ge 1$, thus $t\ge 2$. Then $|X'_{m-1}|>|X_t|$, and so $|X'_{m-1}| + \sum_{i=1}^{t-1} |X_i| > (k-d+t)n$ while $\sum_{i=1}^{t-1} |X_i| \le (k-d+t-1)n$.
By our assumption, we have
$|X'_{m-1}|\le \frac{2}{m-t+1}\cdot\sum_{i=t}^{m} |X_i|$. Together, this gives
\begin{align*}
(m-t+1)(k-d+t)n &<
2\cdot\sum_{i=t}^{m} |X_i| + (m-t-1)\cdot \sum_{i=1}^{t-1} |X_i| \\
&\le 2 \cdot\sum_{i=1}^{m} |X_i| + (m-t-1)\cdot(k-d+t-1)n \\
&= 2kn + (m-t-1)(k-d+t-1)n
\end{align*}
and thus
\[
0>(m-t+1)(k-d+t)-2k - (m-t-1)(k-d+t-1) = m-t+2k-2d+2t-1-2k\ge t-2\ge 0;
\]
a contradiction.
\end{proof}

The following examples show that it is not always possible to merge two color classes if $m=2d-1$ and $k=d$ or if $m=2d-2$ and $k\ge d+1$.

For $m=2d-1$ and $k=d$, let $n$ be a multiple of $2d-1$, let $X$ be a set with $dn$ elements and let $X=X_1 \cup X_2 \cup \dots \cup X_{2d-1}$ be a $(2d-1)$-coloring of $X$ satisfying
\[
|X_1|=|X_2|=\dots = |X_{2d-1}| = \frac{d}{2d-1}\cdot n.
\]
Then by merging an arbitrary pair of color classes we get a class with $(2dn)/(2d-1) > n$ elements, violating condition~\eqref{eq_hall}.

For $m=2d-2$ and $k\ge d+1$, let $n$ be a multiple of $2d-3$, let $X$ be a set with $kn$ elements and let $X=X_1 \cup X_2 \cup \dots \cup X_{2d-2}$ be a $(2d-2)$-coloring of $X$ satisfying
\[
|X_1|=(k-d+1)n \quad \text{ and }\quad |X_2|=|X_3|=\dots =|X_{2d-2}|=\frac{d-1}{2d-3}\cdot n.
\]
Let $Y=X_{2d-3}\cup X_{2d-2}$. Now $X=X_1 \cup Y \cup X_2 \cup X_3\cup \dots \cup X_{2d-4}$ is a $(2d-3)$-coloring of $X$ where $X_1$ and $Y$ are the two largest color classes. We have
\[
|X_1|+|Y|=\left(k-d+1+\frac{2d-2}{2d-3}\right)\cdot n > (k-d+2)n,
\]
which violates condition~\eqref{eq_hall}.

\section{Proof of Theorem~\ref{theorem_plane}}
\label{section_planar}
Our proof of Theorem~\ref{theorem_plane} is a modification of the proof by Kaneko, Kano and Suzuki~\cite[Theorem 2]{kanekoKanoSuzuki} and relies on the following crucial result by Bespamyatnikh, Kirkpatrick and Snoeyink~\cite[Theorem 5]{besp}, restated by Kaneko et al.~\cite[Theorem 6]{kanekoKanoSuzuki} in a more general form, which we also use. More precisely, the formulation by Bespamyatnikh et al.~\cite{besp} assumes that $a_1/b_1 = a_2/b_2 = a_3/b_3$, but Kaneko et al.~\cite{kanekoKanoSuzuki} observed that the proof can be easily modified so that the assumption can be omitted. See Figure~\ref{fig:3fan}.

\begin{figure}
\centering
\begin{tikzpicture}[scale=0.8]
 \newcommand\Square[1]{+(-#1,-#1) rectangle +(#1,#1)}
\begin{scope}
\fill[gray, opacity=.34] (0*360/11: 2.75cm) -- (1*360/11: 2.75cm) -- (2*360/11: 2.75cm) -- (3*360/11: 2.75cm) -- (360*7/22  :1.42cm) -- (360*3/22  :.25cm) -- (360*21/22  :1.42cm);
\fill[gray, opacity=.34] (4*360/11: 2.75cm) -- (5*360/11: 2.75cm) -- (6*360/11: 2.75cm) -- (7*360/11: 2.75cm)  -- (360*14/22  :.25cm) -- (360*9/22  :.25cm) ;
\fill[gray, opacity=.34] (8*360/11: 2.75cm) -- (9*360/11: 2.75cm) -- (10*360/11: 2.75cm) -- (360*20/22  :.25cm)-- (360*16/22  :1.61cm) ;
\end{scope}
\begin{scope}[rotate = 0]
\draw[red,fill=white] (360*7/22  :1.42cm)  circle (2.25pt);
\draw[red,fill=white] (360*11/22  :1.42cm)  circle (2.25pt);
\draw[red,fill=white] (360*16/22  :1.61cm)  circle (2.25pt);
\draw[red,fill=white] (360*21/22  :1.42cm)  circle (2.25pt);
\draw[red,fill=white] (360*9/22  :.25cm)  circle (2.25pt);
\draw[red,fill=white] (360*14/22  :.25cm)  circle (2.25pt);
\draw[red,fill=white] (360*20/22  :.25cm)  circle (2.25pt);
\draw[red,fill=white] (360*3/22  :.25cm)  circle (2.25pt);
\end{scope}
\begin{scope}[rotate = 0]
\foreach \i in {1,...,11} { \fill[blue] (360*\i/11  :2.75cm) circle (2.25pt); }
\end{scope}
\end{tikzpicture}
\caption{The 3-cutting theorem with parameters $a_1 = 3$,  $a_2 = 4$, $a_3=4$ and $b_1 = 2$, $b_2 = 3$, $b_3 = 3$. \label{fig:3fan}}
\end{figure}

\begin{theorem}[$3$-cutting theorem~{\cite{besp,kanekoKanoSuzuki}}]
\label{theorem_3cuts} Let $a_1,a_2, a_3, b_1, b_2, b_3$ be positive integers.
Let $A$ and $B$ be finite disjoint sets of points in the plane such that $A\cup B$ is in general position, $|A| = a_1+a_2+a_3$, and $|B|=b_1+b_2+b_3$. Suppose that any open halfplane containing exactly $a_i$ points from $A$ contains less than $b_i$ points from $B$. Then there exist disjoint convex sets $C_1, C_2, C_3$ such that $|C_i\cap A| = a_i$ and $|C_i \cap B| = b_i$ for every $i\in\{1,2,3\}$.
\end{theorem}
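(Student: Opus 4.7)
The plan is to exhibit a \emph{3-fan}: a point $p$ in the plane together with three rays from $p$ whose three open sectors $C_1$, $C_2$, $C_3$ are pairwise disjoint convex sets satisfying $|C_i \cap A| = a_i$ and $|C_i \cap B| = b_i$ for each $i \in \{1,2,3\}$. Since $A \cup B$ is in general position, one can choose the apex $p$ and the ray directions so that no point of $A \cup B$ lies on any ray or equals $p$; every point of $A \cup B$ then falls in exactly one of the three open sectors, giving the conclusion.

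The first step is a discrete analogue of Ceder's three-partition theorem: for every generic direction $\theta \in [0, 2\pi)$ there is a unique 3-fan whose first ray has direction $\theta$ and whose three sectors, labelled counter-clockwise, contain exactly $a_1$, $a_2$, $a_3$ points of $A$. One proves this by fixing $\theta$, moving a candidate apex along an auxiliary line, pinning down the remaining two rays from the requirement on $a_1$ and $a_2$, and then using an intermediate-value argument in the apex parameter to enforce the last count $a_3$.

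Next, rotate $\theta$ continuously over $[0, 2\pi)$ and track the $B$-counts $\beta_i(\theta) := |C_i(\theta) \cap B|$. The family $\mathcal{F}(\theta)$ varies piecewise continuously, with combinatorial events at those discrete $\theta$-values where a ray of $\mathcal{F}(\theta)$ crosses a point of $A \cup B$. Between events the $\beta_i(\theta)$ are constant integers; at each event one point of $B$ passes from one sector to an adjacent one, so exactly two of the $\beta_i$'s change by $\pm 1$. A parity / intermediate-value argument applied to the triple of deviations $(\beta_i(\theta) - b_i)_{i=1}^3$ is then used to force the existence of some $\theta^{*}$ at which all three deviations vanish. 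The hypothesis enters at degenerate configurations of the fan, where two rays become collinear and one sector $C_i$ reduces to an open halfplane still containing exactly $a_i$ points of $A$; the hypothesis then says this halfplane contains strictly fewer than $b_i$ points of $B$, so $\beta_i(\theta)$ at that degenerate position is strictly below $b_i$. This one-sided bound on the extremal values of the $\beta_i$'s is what closes the IVT argument and forces the target triple $(b_1, b_2, b_3)$ to be attained.

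The main obstacle is the combinatorial/topological argument in the last step: while the existence of a one-parameter family of 3-fans respecting the $A$-counts is a fairly clean construction, showing that the integer-valued vector $(\beta_1, \beta_2, \beta_3)$ actually passes through the prescribed triple $(b_1, b_2, b_3)$ requires a careful topological analysis of the evolution through combinatorial events and a clean use of the hypothesis to rule out the pathological case in which some $\beta_i$ stays strictly above $b_i$ throughout the rotation. A secondary technical hurdle is handling generic-position degeneracies --- double events, apex trajectories hitting data points --- which should be dispatched by a standard perturbation argument.
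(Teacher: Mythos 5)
First, a point of comparison: the paper does not prove Theorem~\ref{theorem_3cuts} at all. It is imported as a black box from Bespamyatnikh, Kirkpatrick and Snoeyink~\cite{besp}, in the slightly more general form observed by Kaneko, Kano and Suzuki~\cite{kanekoKanoSuzuki}. So your attempt must be judged against the literature, not against anything in this paper.

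Your sketch has a genuine gap at its central step, and you in fact name it yourself without resolving it. After constructing a family of red-equitable $3$-fans parametrized by a single angle $\theta$, you track the deviations $\bigl(\beta_1(\theta)-b_1,\ \beta_2(\theta)-b_2,\ \beta_3(\theta)-b_3\bigr)$. Since $\beta_1+\beta_2+\beta_3=|B|=b_1+b_2+b_3$ identically, this deviation vector lives in the two-dimensional lattice plane $\{x+y+z=0\}$, and you are trying to force it to hit the origin using a one-parameter deformation. No intermediate-value or parity argument can do this in dimension two: a path whose steps change two coordinates by $\pm 1$ can wind around the origin indefinitely without ever vanishing, in contrast to the genuinely one-dimensional situation where an integer-valued $\pm 1$-step function that changes sign must vanish. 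This is exactly why the known proofs look different: fan-partition results for measures (B\'ar\'any--Matou\v{s}ek type) use a degree or equivariant-topology argument over a two-parameter space of fans, while the discrete proof of Bespamyatnikh et al.\ replaces the naive rotation by a structured sweep in which the halfplane hypothesis supplies sign conditions for a genuinely one-dimensional intermediate-value argument. Your sketch contains no mechanism playing either role, so the crux of the proof is missing.

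There is also a secondary defect: your Step 1 is internally inconsistent. Once the apex and the direction of the first ray are fixed, choosing the other two rays so that two sectors contain exactly $a_1$ and $a_2$ red points automatically forces the third sector to contain $a_3=|A|-a_1-a_2$ red points; there is nothing left for your ``intermediate-value argument in the apex parameter'' to enforce. Consequently essentially every apex position yields a red-equitable fan, the claimed uniqueness fails, and the family $\mathcal{F}(\theta)$ is not canonically defined, so its blue counts cannot be ``tracked'' as you propose. Moreover, with the apex confined to an arbitrary auxiliary line, it is unclear that the family ever reaches the degenerate configurations (a sector flattening into a halfplane with exactly $a_i$ red points) where you intend to invoke the hypothesis.
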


Our proof of Theorem~\ref{theorem_plane} actually gives a slightly stronger conclusion. In particular, the $k$-islands form a ``near-equipartition" in the sense that the numbers of points of a given color in distinct $k$-islands differ by at most $1$.

\begin{proof}[Proof of Theorem~\ref{theorem_plane}]
We denote the two color classes of $X$ by $A$ and $B$, so $X=A\cup B$. We proceed by induction on $n$. The statement is trivial for $n=1$. If $|A|$ and $|B|$ are both divisible by $n$, then the result follows from Theorem~\ref{theorem_sakai}. We may therefore assume that there are positive integers $a, b, s, t$ such that
\[|A| = na+s, \;\;\; |B| = nb+t , \;\;\; k = a+b+1, \;\text{ and }\;\; s+t = n.\]
We claim that there exist pairwise disjoint $k$-islands $C_1,\allowbreak C_2, \dots,\allowbreak C_{s},\allowbreak D_1,\allowbreak D_2, \dots, \allowbreak D_{t}$ such that $|C_j\cap A| = a+1$, $|C_j\cap B| = b$, $|D_i\cap A|= a$, and $|D_i\cap B| = b+1$ for every $j\in[s], i\in[t]$.

For a fixed integer $i\in[t]$, consider an open halfplane $H_i$ containing precisely $ia$ points from $A$. If $|H_i\cap B| = i(b+1)$, then the complement of $H_i$ contains $(n-i)a +s$ points from $A$ and $(n-i)b +(t-i)$ points from $B$, and we are done by induction. We may therefore assume that $H_i$ contains strictly less than $i(b+1)$ points or strictly more than $i(b+1)$ points. The following observation is well-known (see for instance \cite[Lemma 3]{besp}) and can be shown by a simple continuity argument.

\begin{observation}
\label{obs_signwell}
Let $i\in[t]$ and let $H_i$ and $H_i'$ be two open halfplanes, each containing exactly $ia$ points from $A$. If $|H_i\cap B|<i(b+1)$ and $|H'_i\cap B|>i(b+1)$, then there exists a halfplane $H''_i$ satisfying $|H''_i\cap A|=ia$ and $|H''_i\cap B|=i(b+1)$.
\end{observation}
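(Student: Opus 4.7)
The plan is a continuous-deformation / discrete intermediate value theorem argument. I would construct a continuous one-parameter family of open halfplanes $\{H(s)\}_{s\in[0,1]}$ with $H(0)=H_i$, $H(1)=H'_i$, and $|H(s)\cap A|=ia$ for every $s$, and then track the integer-valued function $f(s):=|H(s)\cap B|$, which must pass from below $i(b+1)$ to above it.

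First I would parameterize halfplanes by outward unit normal direction and offset, $(\theta,r)\in S^1\times\mathbb R$, writing $H(\theta,r)$ for the corresponding open halfplane. For each fixed $\theta$, general position of $A$ together with $0<ia<|A|$ implies that the set of offsets $r$ with $|H(\theta,r)\cap A|=ia$ is a nonempty open interval $I(\theta)$. Choosing $r^\ast(\theta)$ to be the midpoint of $I(\theta)$ yields a closed curve in the space $\mathcal H$ of halfplanes whose intersection with $A$ has cardinality exactly $ia$; the function $r^\ast$ is continuous in $\theta$ except at the finitely many ``swap'' angles where two points of $A$ project to the same value, and at each such angle the halfplanes on either side both lie in $\mathcal H$ and differ only in which single $A$-point is inside. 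After an infinitesimal perturbation of $H_i$ and $H'_i$ that preserves all point counts (justified because both halfplanes are open), I may assume both lie on this curve, giving a path $\gamma:[0,1]\to\mathcal H$ from $H_i$ to $H'_i$.

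Next I would analyze $f(s)=|\gamma(s)\cap B|$. Between swap angles, the boundary line of $\gamma(s)$ moves continuously, and $f$ changes only at discrete instants when a single point of $B$ crosses the boundary; by general position of $A\cup B$ each such crossing contributes a change of exactly $\pm 1$ to $f$. At a swap angle one might worry that the jump in $r^\ast$ could cause $f$ to jump as well, but the swap is realized by a short auxiliary local motion in which the line pivots from one $A$-point to an adjacent one; general position of $A\cup B$ lets us perform this pivot without any $B$-point ever lying on the boundary, so $f$ is unchanged across swaps. Hence $f$ is a step function on $[0,1]$ with steps of size $\pm 1$.

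Finally, since $f(0)=|H_i\cap B|<i(b+1)<|H'_i\cap B|=f(1)$, the discrete intermediate value theorem furnishes some $s^\ast\in(0,1)$ with $f(s^\ast)=i(b+1)$, and $H''_i:=\gamma(s^\ast)$ satisfies both $|H''_i\cap A|=ia$ and $|H''_i\cap B|=i(b+1)$. The main technical point, and the step requiring the most care, is the treatment of the swap events: one must ensure that the reparameterization of the halfplane at a swap neither alters $|H\cap A|$ nor forces an unwanted $B$-point crossing, which is exactly where general position of $A\cup B$ is used in an essential way.
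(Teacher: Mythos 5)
Your argument is correct and is precisely the ``simple continuity argument'' that the paper invokes without writing out (it only cites Lemma~3 of Bespamyatnikh et al.~\cite{besp}): move a halfplane through a one-parameter family keeping $|H\cap A|=ia$, note that $|H\cap B|$ changes by unit steps, and apply the discrete intermediate value theorem. The only point to phrase slightly more carefully is that during a swap pivot the open halfplane momentarily contains $ia-1$ points of $A$, but since $f$ is constant across each swap, the instant with $f=i(b+1)$ can always be chosen away from the swaps, so the conclusion is unaffected.
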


In view of Observation~\ref{obs_signwell} we may assume that either every open halfplane containing exactly $ia$ points from $A$ contains less than $i(b+1)$ points from $B$, or every open halfplane containing exactly $ia$ points from $A$ contains less than $i(b+1)$ points from $B$. We denote these two cases by $\sigma_a(i) = -1$ and $\sigma_a(i) = +1$, respectively.

By the same argument, for every fixed integer $j\in[s]$, either every open halfplane containing exactly $j(a+1)$ points from $A$ contains less than $jb$ points from $B$, or every open halfplane containing exactly $j(a+1)$ points from $A$ contains more than $jb$ points from $B$. We denote these two cases by $\sigma_{a+1}(i) = -1$ and $\sigma_{a+1}(i) = +1$, respectively.

Under the assumption that there is no open halfplane containing exactly $a$ points from $A$ and $(b+1)$ points from $B$, or $(a+1)$ points from $A$ and $b$ points from $B$, we observe the following.

\begin{observation}
$\sigma_a(1) = \sigma_{a+1}(1)$.
\end{observation}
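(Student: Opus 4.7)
The plan is to argue by contradiction, via a single sweep/continuity observation: moving the boundary of an open halfplane across exactly one point of $A$ (and no point of $B$) changes the count of A-points inside by $1$ while leaving the B-point count unchanged. Combined with the dichotomies expressed by $\sigma_a(1)$ and $\sigma_{a+1}(1)$, and with the standing assumption that no open halfplane realizes the count $(a,b+1)$ or $(a+1,b)$ on $(A,B)$, this rigidity will force $\sigma_a(1)=\sigma_{a+1}(1)$.

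Concretely, I would pick a unit vector $\vec u\in\mathbb R^2$ whose associated linear functional $p\mapsto\vec u\cdot p$ is injective on $A\cup B$; this excludes only the finitely many directions perpendicular to differences of pairs of points of $A\cup B$, so a generic $\vec u$ works. Since $|A|\ge n\ge 1$ we have $a\ge 1$ and $a+1\le|A|$, so the $(a+1)$-st point $p^A_{a+1}$ of $A$ in the resulting decreasing order is well-defined. For any $\varepsilon>0$ smaller than the minimum gap between $\vec u\cdot p^A_{a+1}$ and the $\vec u\cdot\,$-value of any other point of $A\cup B$, the two open halfplanes
\[
H^{+}=\{x\in\mathbb R^2:\vec u\cdot x>\vec u\cdot p^A_{a+1}+\varepsilon\}\quad\text{and}\quad H^{-}=\{x\in\mathbb R^2:\vec u\cdot x>\vec u\cdot p^A_{a+1}-\varepsilon\}
\]
contain exactly $a$ and $a+1$ points of $A$ respectively, and they contain the very same set of points of $B$, namely $\{q\in B:\vec u\cdot q>\vec u\cdot p^A_{a+1}\}$. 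Set $\beta=|H^{+}\cap B|=|H^{-}\cap B|$.

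Now suppose for contradiction that $\sigma_a(1)\ne\sigma_{a+1}(1)$. In the case $\sigma_a(1)=-1$ and $\sigma_{a+1}(1)=+1$, the sign conditions, together with the exclusion of the counts $(a,b+1)$ and $(a+1,b)$, give $\beta\le b$ from $H^{+}$ and $\beta\ge b+1$ from $H^{-}$, a contradiction. The symmetric case $\sigma_a(1)=+1$, $\sigma_{a+1}(1)=-1$ analogously yields $\beta\ge b+2$ and $\beta\le b-1$, which is again impossible. Hence $\sigma_a(1)=\sigma_{a+1}(1)$.

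I do not expect a real obstacle; the whole argument is essentially a one-line continuity observation. The only details requiring care are the choice of $\vec u$ and $\varepsilon$ so that $H^{+}$ and $H^{-}$ differ precisely by the single A-point $p^A_{a+1}$, and the verification that the hypotheses $|A|\ge n$ and $s,t\ge 1$ rule out the degenerate ranges of $a$; both are routine.
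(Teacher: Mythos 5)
Your proof is correct and follows essentially the same route as the paper: both arguments produce two nested open halfplanes whose boundaries are parallel and separated by exactly one point of $A$ (and no point of $B$), so that they contain $a$ and $a+1$ points of $A$ respectively but the same number $\beta$ of points of $B$, and then read off $\sigma_a(1)=\sigma_{a+1}(1)$ from the sign dichotomies. Your version just phrases the construction via a generic linear functional and a threshold $\varepsilon$ instead of a line through a point of $A$ and a nearby parallel line; the content is identical.
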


\begin{proof}
To see why this holds, consider a line $l$ passing through one point from $A$ and no point from $B$, and has precisely $a$ points from $A$ on its left side. Let $l'$ be a line parallel to $l$ slightly to the right of $l$ such that no points from $A\cup B$ are contained in the open strip bounded by $l$ and $l'$. Thus the two open left halfplanes bounded by $l$ and $l'$ contain the same number of points from $B$, which is either smaller than $b$ or greater than $b+1$.
\end{proof}

Without loss of generality, we may assume that $\sigma_a(1) = \sigma_{a+1}(1) = -1$. (Otherwise, we can just exchange the roles of $A$ and $B$.) We now claim that if the sequence $\sigma_a(1), \sigma_a(2), \sigma_a(3), \dots$, changes signs, then we can find parameters satisfying the conditions of Theorem \ref{theorem_3cuts}. To see this, suppose there exists a smallest integer $i\le t$ such that \[\sigma_a(1) = \sigma_a(2) = \cdots = \sigma_a(i-1) = -1 \; \text{ and } \; \sigma_a(i) = +1.\] Consider a line $l$ disjoint with $A\cup B$ that has exactly $ia$ points from $A$ on its left side. By the assumption $\sigma_a(i)=+1$, it follows that on the right side of $l$ there are exactly $|A|-ia = (n-i)a + s$ points from $A$ and less than $|B|-i(b+1) = (n-i)b+(t-i)$ points from $B$. Therefore, the hypothesis of Theorem~\ref{theorem_3cuts} is satisfied with
\begin{align*}
a_1 &= a,  & a_2 &= (i-1)a,      & a_3 &= (n-i)a+s \\
b_1 &= b+1,&  b_2 &= (i-1)(b+1), & b_3 &= (n-i)b + (t-i).
\end{align*}

We can thus apply the inductive hypothesis in each of the resulting convex sets $C_1$, $C_2$, $C_3$ guaranteed by Theorem~\ref{theorem_3cuts}.

By the same argument applied to the sequence $\sigma_{a+1}(1), \sigma_{a+1}(2), \dots, \sigma_{a+1}(s)$, we may assume that $\sigma_a(i) = \sigma_{a+1}(j) = -1$ for all $i\in[t]$ and $j\in[s]$. In particular, $\sigma_{a+1}(s) = \sigma_a(t)$, but this is a contradiction since these signs correspond to complementary halfplanes.
\end{proof}

\section{Proof of Theorem~\ref{theorem_Rd}}
\label{section_simplices}

The proof of Theorem~\ref{theorem_Rd} is similar to the proof of Theorem~\ref{theorem_kyncl}~\cite{kano}, but it is a bit easier, since here we can use directly the continuous ham sandwich theorem, instead of its generalization.

\begin{theorem}[The ham sandwich theorem~\cite{ST42_generalized}, {\cite[Theorem 3.1.1]{Mato2003}}]
\label{theorem_ham_sandwich}
Let $\mu_1,\mu_2,\dots,\allowbreak \mu_d$ be $d$ absolutely continuous finite Borel measures on
$\mathbb{R}^d$. Then there exists a hyperplane $h$ such that
each open halfspace $H$ defined by $h$ satisfies
$\mu_i(H)=\mu_i(\mathbb{R}^d)/2$ for every $i\in [d]$.
\end{theorem}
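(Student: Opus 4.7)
The plan is the classical reduction to the Borsuk--Ulam theorem. First, I would parametrize oriented open halfspaces in $\mathbb{R}^d$ by the sphere $S^d \subset \mathbb{R}^{d+1}$: for $v = (v_0, v_1, \ldots, v_d) \in S^d$, set
\[
H_v = \{x \in \mathbb{R}^d : v_1 x_1 + v_2 x_2 + \cdots + v_d x_d < v_0\}
\]
with boundary hyperplane $h_v = \{x \in \mathbb{R}^d : v_1 x_1 + \cdots + v_d x_d = v_0\}$. The antipode $-v$ yields the opposite open halfspace $H_{-v}$, and the two poles $(\pm 1, 0, \ldots, 0)$ correspond to the degenerate cases $H_v = \mathbb{R}^d$ and $H_v = \emptyset$; these require no special treatment.

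Next I would define the map $g : S^d \to \mathbb{R}^d$ by
\[
g(v) = \bigl(\mu_1(H_v) - \tfrac{1}{2}\mu_1(\mathbb{R}^d),\; \ldots,\; \mu_d(H_v) - \tfrac{1}{2}\mu_d(\mathbb{R}^d)\bigr),
\]
and establish two properties. Oddness follows because $H_v$, $h_v$, $H_{-v}$ partition $\mathbb{R}^d$ and absolute continuity of each $\mu_i$ (together with the fact that any hyperplane has Lebesgue measure zero) forces $\mu_i(h_v) = 0$; hence $\mu_i(H_v) + \mu_i(H_{-v}) = \mu_i(\mathbb{R}^d)$, so $g(-v) = -g(v)$. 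Continuity is the key analytic step: if $v_n \to v$ in $S^d$, then the indicator $\chi_{H_{v_n}}(x)$ converges to $\chi_{H_v}(x)$ for every $x \notin h_v$, and since $\mu_i(h_v) = 0$ the dominated convergence theorem gives $\mu_i(H_{v_n}) \to \mu_i(H_v)$.

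The final step is a direct invocation of the Borsuk--Ulam theorem in its antipode form: every continuous odd map $S^d \to \mathbb{R}^d$ has a zero. Applied to $g$, this produces some $v^\ast \in S^d$ with $\mu_i(H_{v^\ast}) = \tfrac{1}{2}\mu_i(\mathbb{R}^d)$ for every $i \in [d]$. Using $\mu_i(h_{v^\ast}) = 0$ once more, the opposite open halfspace $H_{-v^\ast}$ also has measure $\tfrac{1}{2}\mu_i(\mathbb{R}^d)$, so $h_{v^\ast}$ is the desired simultaneously-bisecting hyperplane.

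The only genuinely technical point is the continuity of $g$, and this is precisely where the absolute continuity hypothesis is used: without it, the function $v \mapsto \mu_i(H_v)$ could jump whenever a hyperplane carries positive $\mu_i$-mass. Once continuity is in hand, the remainder is a formal odd-map computation plus the topological ``black box'' supplied by Borsuk--Ulam.
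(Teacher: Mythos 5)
The paper itself gives no proof of this statement: Theorem~\ref{theorem_ham_sandwich} is imported as a classical black box, cited from Stone--Tukey and from Matou\v{s}ek's book, and is then used in the proof of Theorem~\ref{theorem_special_discrete_sandwich}. Your argument is the standard Borsuk--Ulam reduction, which is essentially the proof given in the cited reference \cite[Theorem 3.1.1]{Mato2003}, so there is no divergence of approach to report, and your proof is correct in all essentials: the $S^d$-parametrization of open halfspaces, the oddness of $g$ via $\mu_i(h_v)=0$, and continuity via dominated convergence are all handled properly. The one claim that deserves a sentence of justification is that the poles $(\pm 1,0,\dots,0)$ ``require no special treatment'': a priori, Borsuk--Ulam could return a pole as the zero of $g$, and then $h_{v^\ast}=\emptyset$ is not a hyperplane. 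The fix is immediate: at a pole, $g(v)=\pm\tfrac{1}{2}\bigl(\mu_1(\mathbb{R}^d),\dots,\mu_d(\mathbb{R}^d)\bigr)$, so a pole can be a zero of $g$ only if every $\mu_i$ is the zero measure, in which case any hyperplane satisfies the conclusion trivially; otherwise the zero produced by Borsuk--Ulam is automatically non-polar and your argument concludes as written.
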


Theorem~\ref{theorem_Rd} follows by induction from the following special discrete version of the ham sandwich theorem, which is an analogue of the discrete hamburger theorem from~\cite{kano}.

We say that point sets $X_1, X_2, \dots, X_{d}$ are \emph{balanced} in a subset $S\subseteq \mathbb{R}^d$ if for every $i\in[d]$, we have
\[
\lvert S\cap X_i\rvert \ge \frac{1}{d+1}\cdot \sum_{j=1}^{d} \lvert S\cap X_j\rvert.
\]

\begin{theorem}
\label{theorem_special_discrete_sandwich}
Let $d\ge 2$ and $n\ge 2$ be integers.
Let $X_1, X_2, \dots, X_{d}\subset \mathbb{R}^d$ be $d$ disjoint point sets balanced in $\mathbb{R}^d$. Suppose that $X_1\cup X_2 \cup\cdots \cup X_d$ is in general position and $\sum_{i=1}^{d+1} |X_i|=(d+1)n$. Then there exists a hyperplane $h$ disjoint with each $X_i$ such that for each open halfspace $H$ determined by $h$, the sets $X_1, X_2,\dots, X_{d}$ are balanced in $H$ and $\sum_{i=1}^{d} |H\cap X_i|$ is a positive integer multiple of $d+1$.
\end{theorem}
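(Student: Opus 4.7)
My plan is to follow the outline of the ``discrete hamburger theorem'' from~\cite{kano}: apply the continuous ham sandwich theorem to absolutely continuous approximations of $X_1,\dots,X_d$, pass to a limit as the approximation shrinks, and then discretize by carefully rounding the boundary points.

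Concretely, for each $\varepsilon > 0$, let $\mu_i^\varepsilon$ be the uniform measure on $\bigcup_{p \in X_i} B(p,\varepsilon)$, scaled so that $\mu_i^\varepsilon(\mathbb{R}^d) = |X_i|$. Each $\mu_i^\varepsilon$ is absolutely continuous, so Theorem~\ref{theorem_ham_sandwich} produces a hyperplane $h_\varepsilon$ bisecting every $\mu_i^\varepsilon$. Pass to a convergent subsequence as $\varepsilon\to 0$; the $h_\varepsilon$ converge to a hyperplane $h^\ast$, and by general position of $X := X_1\cup\dots\cup X_d$ the hyperplane $h^\ast$ contains at most $d$ points of $X$. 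Writing $z_i := |h^\ast \cap X_i|$ and $z := \sum_i z_i \le d$, the bisecting condition in the limit yields
\[
|X_i \cap H^\ast_+| \;=\; |X_i \cap H^\ast_-| \;=\; \frac{|X_i| - z_i}{2},
\]
and in particular $|X_i|\equiv z_i\pmod 2$ for each $i$.

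To finish, I would perturb $h^\ast$ slightly into a hyperplane $h$ disjoint from $X$ and decide, for each boundary point in $h^\ast \cap X_i$, whether it lands in $H_+$ or $H_-$. Denoting by $z_i^+$ the number sent to $H_+$, the halfspace count becomes $a_i^+ := \tfrac{|X_i|-z_i}{2} + z_i^+$, and the total in $H_+$ is $N^+ := \tfrac{(d+1)n - z}{2} + \sum_i z_i^+$. Since $z\le d < d+1$, the divisibility constraint $N^+ \equiv 0 \pmod{d+1}$ pins $\sum_i z_i^+ \in \{0,1,\dots,z\}$ down to (at most) one value, so the combinatorial target is essentially unique. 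The balance condition in both halfspaces is then easy to verify using the assumed balancedness of $X_1,\dots,X_d$ and the fact that $z\le d$ produces only small corrections; positivity of $N^\pm$ follows from $n\ge 2$.

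The main obstacle will be the final realizability step: producing a perturbation of $h^\ast$ that distributes the $z$ boundary points among the two sides in the (essentially unique) way dictated by the divisibility constraint. I expect this to follow by exploiting the freedom in the continuous ham sandwich --- for instance, by rotating $h^\ast$ infinitesimally about a $(d-2)$-flat through a chosen subset of $h^\ast\cap X$, so that every combinatorially admissible partition of $h^\ast \cap X$ is realized for some choice of rotation axis, or alternatively by perturbing both the smoothing parameter $\varepsilon$ and the hyperplane $h_\varepsilon$ simultaneously so that the limit configuration sees the right partition. A careful parity/base-case analysis will likely be required to ensure that the required residue class actually sits inside $\{0,1,\dots,z\}$ and that both $N^+$ and $N^-$ come out strictly positive.
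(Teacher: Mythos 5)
There is a genuine gap, and it sits exactly where you flagged uncertainty: the rounding step. With plain uniform measures $\mu_i^\varepsilon$ of equal weight on the balls, nothing forces the ham sandwich hyperplane $h_\varepsilon$ to intersect the support of any of the measures, so in the limit you may well have $z=0$ (or any $z<d$). Your only source of rounding freedom is the $z$ points lying exactly on $h^\ast$, which lets $N^+$ range over only $z+1$ consecutive integers; an interval of $z+1$ consecutive integers is guaranteed to contain a multiple of $d+1$ only when $z=d$. A concrete failure: $d=3$, $n=3$, $|X_1|=|X_2|=|X_3|=4$ (balanced, $|X|=12$). A clean ham sandwich cut splits each class $2+2$, giving $6$ points per side with $z=0$, and $6$ is not a multiple of $4$; there is nothing left to round. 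So the ``essentially unique'' target value of $\sum_i z_i^+$ need not lie in $\{0,\dots,z\}$ at all, and no choice of rotation axis can fix this.

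The paper closes this gap with the Elton--Hill weighting trick: the ball of each point $\mathbf{p}$ is given mass $1-\varepsilon_k(\mathbf{p})$ with the $\varepsilon_k(\mathbf{p})$ chosen so that no sub-collection of balls of one color carries exactly half of that color's mass (condition~\eqref{eq_epsilony}). This forces every bisecting hyperplane $h_k$ to cut through the support of \emph{each} of the $d$ measures, and hence the limit hyperplane $h'$ touches or contains one ball $B(\mathbf{p}_i)$ per color class --- tangent if $|X_i|$ is even, through the center if $|X_i|$ is odd. Crucially, even the merely tangent balls contribute rounding freedom (an infinitesimal rotation puts their center on either side), so one always has $d$ independently assignable points and hence $d+1$ consecutive achievable values of $N^+$, which guarantees a multiple of $d+1$. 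Your limit-of-hyperplanes framework and the rotation idea for realizability are otherwise in line with the paper's argument, but without perturbing the measures (or some equivalent device guaranteeing one boundary ball per class) the divisibility step cannot be completed. The balance verification, which you defer, also requires a short but genuine case analysis on the parity of $n$ and on which side each $\mathbf{p}_i$ is sent, as in the paper's Cases 1 and 2.
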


\subsection{Proof of Theorem~\ref{theorem_special_discrete_sandwich}}

Let $X=\bigcup_{i=1}^{d} X_i$.
Replace each point $\mathbf{p}\in X$ by an open ball $B(\mathbf{p})$ of a sufficiently small radius $\delta>0$ centered at $\mathbf{p}$, so that no hyperplane intersects or touches more than $d$ of these balls. We will apply the ham sandwich theorem for suitably defined measures supported by the balls $B(\mathbf{p})$. Rather than taking the same measure for each of the balls, we use a variation of the trick used by Elton and Hill~\cite{EH11_stronger_conclusion}. For each $\mathbf{p}\in X$ and $k\ge 1$, we choose a small number $\varepsilon_k(\mathbf{p})\in (0, 1/k)$ so that for every $i\in [d]$
and for every subset $Y_i\subset X_i$, we have
\begin{equation}\label{eq_epsilony}
\sum_{\mathbf{p}\in Y_i} (1-\varepsilon_k(\mathbf{p})) \neq \frac{1}{2} \cdot \sum_{\mathbf{p}\in X_i} (1-\varepsilon_k(\mathbf{p})).
\end{equation}
Now let $k\ge 1$ be a fixed integer. For each $i\in [d]$, let $\mu_{i,k}$ be the measure supported by the closure of $\bigcup_{\mathbf{p}\in X_i} B(\mathbf{p})$ such that it is uniform (that is, equal to a multiple of the Lebesgue measure) on each of the balls $B(\mathbf{p})$ and $\mu_{i,k}(B(\mathbf{p}))=1-\varepsilon_k(\mathbf{p})$.

We apply the ham sandwich theorem (Theorem~\ref{theorem_ham_sandwich}) to the measures $\mu_{i,k}$, $i\in [d]$, and obtain a bisecting hyperplane $h_k$.

For each $i\in [d]$, let $\mu_i$ be the limit of the measures $\mu_{i,k}$ as $k$ tends to infinity; that is, $\mu_i$ is uniform on every ball $B(\mathbf{p})$ such that $\mathbf{p}\in X_i$ and $\mu_i(B(\mathbf{p}))=1$. Since the supports of all the measures $\mu_{i,k}$ are uniformly bounded, there is a sequence $\{k_m\}$ such that the subsequence of hyperplanes $h_{k_m}$ has a limit $h'$. More precisely, if $h_{k_m}=\{\mathbf{x}\in \mathbb{R}^d; \mathbf{x} \cdot \mathbf{v}_m=c_m\}$ where $\mathbf{v}_m \in S^{d-1}$, then $h'=\{\mathbf{x}\in \mathbb{R}^d; \mathbf{x} \cdot \mathbf{v}=c\}$ where $\mathbf{v} = \lim_{m\rightarrow \infty} \mathbf{v}_m$ and $c = \lim_{m\rightarrow \infty} c_m$. By the absolute continuity of the measures, each open halfspace $H$ defined by $h'$ satisfies $\mu_i(H)=\mu_i(\mathbb{R}^d)/2$ for every $i\in [d]$.

The condition~\eqref{eq_epsilony} ensures that for every $m$, the hyperplane $h_{k_m}$ intersects the support of each measure $\mu_{i,k_m}$, $i\in [d]$. In particular, for each $i\in [d]$, there is a point $\mathbf{p}_i\in X_i$ such that for infinitely many $m\ge 1$, the hyperplane $h_{k_m}$ intersects each of the balls $B(\mathbf{p}_i)$. It follows that for each $i\in [d]$, the hyperplane $h'$ either touches $B(\mathbf{p}_i)$ or intersects $B(\mathbf{p}_i)$. In fact, $h'$ touches $B(\mathbf{p}_i)$ if $|X_i|$ is even and $h'$ contains the point $\mathbf{p}_i$ if $|X_i|$ is odd, and each open halfplane determined by $h'$ contains exactly $\lfloor|X_i|/2\rfloor$ points of $X_i$. 

We now rotate the hyperplane $h'$ slightly, to a hyperplane $h$ that touches each of the balls $B(\mathbf{p}_i)$, so that the point sets $X_1, X_2, \dots, X_d$ are balanced in each halfspace determined by $h$ and the number of points of $X$ in each halfspace is divisible by $d+1$. Essentially, for each point $\mathbf{p}_i$ we can decide independently on which side of $h$ it will end.
We consider two cases according to the parity of $n$.


\paragraph{Case 1.} Assume that $n=2n'$ for some positive integer $n'$. Since the point sets $X_1,\allowbreak X_2, \dots,\allowbreak X_d$ are balanced in $\mathbb{R}^2$, we have $|X_i|\ge 2n'$ for each $i\in[d]$. To satisfy the balancing condition for the two halfspaces, each halfspace must contain at least $n'$ points from each $X_i$. This is already satisfied for the original hyperplane $h'$ and each $X_i$ with an even number of points. If $|X_i|$ is odd for some $i$, then $|X_i|\ge 2n'+1$ and thus moving the point $\mathbf{p}_i$ to either side of $h$ will keep at least $n'$ points of $X_i$ in each halfspace. Since $|X|$ is even, there is an even number of $X_i$'s with odd cardinality, and therefore $|h'\cap X|$ is also even. Since $|X|/2=n'(d+1)$, the divisibility condition will be satisfied if each halfspace gets exactly $|X|/2$ points of $X$. This is easily achieved if we move half of the points from $h'\cap X$ to one halfspace and the remaining points of $h'\cap X$ to the other halfspace. 

\paragraph{Case 2.} Assume that $n=2n'+1$ for some positive integer $n'$. Then $|X|=(2n'+1)(d+1)$, and so the only way of satisfying the divisibility condition is having $(n'+1)(d+1)$ points in one halfspace 
and $n'(d+1)$ points in the other halfspace determined by $h$. Since we can move $d$ points between the halfspaces, this determines the number of points $\mathbf{p}_i$ that have to end in each halfspace determined by $h$. 

Since $h'$ bisects each of the measures $\mu_i$ and $\mu_i(\mathbb{R}^2)=|X_i|$ for each $i$, we have to move $(d+1)/2$ units of the total measure $\mu=\sum_{i=1}^d \mu_i$ from one halfspace to the other one. Moving a point $\mathbf{p}_i$ contained in $h'$ to one halfspace corresponds to moving half unit of $\mu$, and moving a point $\mathbf{p}_i$ from one open halfspace to the opposite open halfspace corresponds to moving one unit of $\mu$.

Assume without loss of generality that $h'$ is horizontal, so that $h$ will be almost horizontal. We will refer to the two halfspaces determined by $h'$ or $h$ as the halfspace \emph{above} and \emph{below} $h'$ or $h$, respectively.

Let $c=|h'\cap X|$; that is, $c$ is the number of sets $X_i$ with odd cardinality.
Let $a$ be the number of the points $\mathbf{p}_i$ above $h'$, and let $b$ be the number of the points $\mathbf{p}_i$ below $h'$. Clearly, $a+b+c=d$. Assume without loss of generality that $a\ge b$. Since $c$ has the same parity as $|X|=(2n'+1)(d+1)$, then $d+1-c$ is even, and thus $a+b=d-c$ is odd. This implies that $a\ge b+1$, and consequently $c/2+a\ge (d+1)/2$.

We move all the $c$ points of $h'\cap X$ below $h$, and an arbitrary set of $(d+1-c)/2$ points $\mathbf{p}_i$ that are above $h'$ to the halfspace below $h$. After that, the halfspace below $h$ has exactly $(n'+1)(d+1)$ points of $X$. See Figure~\ref{fig:rounding}.

\begin{figure}
\centering
\begin{tikzpicture}[scale = .6]
\fill[blue!90!white, opacity =.75]
(-2.6,0) circle (.5cm);
\fill[blue!80!white, opacity =.75]
(-.6,0) circle (.5cm);
\fill[blue!70!white, opacity =.75]
(1.4,0) circle (.5cm);
\fill[blue!60!white, opacity =.75]
(3,0.5) circle (.5cm);
\fill[blue!50!white, opacity =.75]
(4.5,.5) circle (.5cm);
\fill[blue!40!white, opacity =.75]
(6,0.5) circle (.5cm);
\fill[blue!30!white, opacity =.75]
(7,-0.5) circle (.5cm);
\fill[blue!24!white, opacity =.75]
(8.5,-0.5) circle (.5cm);
\fill[blue!19!white, opacity =.75]
(10,-0.5) circle (.5cm);

\draw[opacity = .6] 
(-2.6,0) circle (.5cm)
(-.6,0) circle (.5cm)
(1.4,0) circle (.5cm)
(3,0.5) circle (.5cm)
(4.5,.5) circle (.5cm)
(6,0.5) circle (.5cm)
(7,-0.5) circle (.5cm)
(8.5,-0.5) circle (.5cm)
(10,-0.5) circle (.5cm);

\draw (-4,0) -- (11.4,0);

\draw [decorate,decoration={brace,amplitude=8pt},xshift=-4pt,yshift=0pt] (-2.8,.7) -- (1.9,.7) ;

\draw [decorate,decoration={brace,amplitude=8pt},xshift=-4pt,yshift=0pt] (2.7,1.2) -- (6.5,1.2) ;

\draw [decorate,decoration={brace,amplitude=8pt},xshift=-4pt,yshift=0pt]   (10.5,-1.2) -- (6.8,-1.2);

\node[left] at (-4.2,0) {$h'$};
\node[above] at (-.56,1.2) {$c$};
\node[above] at (4.48,1.7) {$a$};
\node[below] at (8.52,-1.7) {$b$};

\end{tikzpicture}
\caption{A ham sandwich cut by a hyperplane $h'$ for $m=d=9$, $k=10$ and $n$ odd. The hyperplane $h'$ intersects or touches $d= a + b + c$ of the balls, one from each $X_i$. We perturb $h'$ so that the $c$ balls with centers in $h'$ and $(d+1-c)/2$ of the balls tangent to $h'$ from above end up below the perturbed hyperplane.}
\label{fig:rounding}
\end{figure}
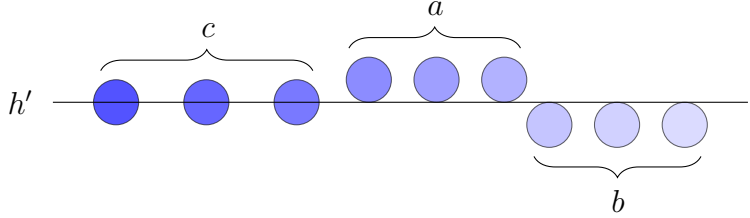

We now verify that the balancing condition is satisfied in both halfspaces.
Since $|X_i|\ge 2n'+1$ for every $i$, every set $X_i$ of odd cardinality has at least $n'$ points in the halfspace 
above $h$ and at least $n'+1$ points below $h$. Every set $X_i$ of cardinality at least $2n'+3$ has at least $n'+1$ points in each halfspace determined by $h$. Every set $X_i$ of cardinality $2n'+2$ with $\mathbf{p}_i$ below $h'$ has exactly $n'+1$ points in each halfspace determined by $h$. Finally, every set $X_i$ of cardinality $2n'+2$ with $\mathbf{p}_i$ above $h'$ has $n'$ or $n'+1$ points above $h$, and $n'+2$ or $n'+1$ points below $h$.

\section{Final remarks}

Conjecture~\ref{conj_main} is still open for $d\ge 3$ and $k,m\ge d+1$. It is likely that generalizing the $3$-cutting theorem~{\cite{besp,kanekoKanoSuzuki}} to $\mathbb{R}^d$, up to $d+1$ parts and $2d-2$ color classes might be a fruitful approach to prove Conjecture~\ref{conj_main} in full generality.

Several proofs of special cases of Conjecture~\ref{conj_main} include a step where a partition theorem for measures is discretized into a corresponding partition theorem for point sets; see Sakai's proof of the $3$-cutting theorem~\cite{sakai}, our Theorem~\ref{theorem_special_discrete_sandwich} or the discrete version of the hamburger theorem~\cite{kano}. However, there are difficulties with this approach already for $d=3$ and $k=m=4$: Figure~\ref{fig_discretization_counterexample2} shows that cutting by a single hyperplane is not sufficient.

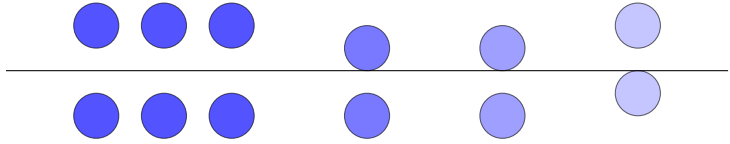
\begin{figure}
\centering
\begin{tikzpicture}[scale = .6]
\fill[blue!90!white, opacity =.75]
(-4,1) circle (.5cm)
(-2.5,1) circle (.5cm)
(-1,1) circle (.5cm);
\fill[blue!90!white, opacity =.75]
(-4,-1) circle (.5cm)
(-2.5,-1) circle (.5cm)
(-1,-1) circle (.5cm);

\fill[blue!70!white, opacity =.75]
(2,.5) circle (.5cm)
(2,-1) circle (.5cm);

\fill[blue!50!white, opacity =.75]
(5,.5) circle (.5cm)
(5,-1) circle (.5cm);

\fill[blue!30!white, opacity =.75]
(8,1) circle (.5cm)
(8,-.5) circle (.5cm);

\draw[opacity = .6] 
(-4,1) circle (.5cm)
(-2.5,1) circle (.5cm)
(-1,1) circle (.5cm)
(-4,-1) circle (.5cm)
(-2.5,-1) circle (.5cm)
(-1,-1) circle (.5cm)
(2,.5) circle (.5cm)
(2,-1) circle (.5cm)
(5,.5) circle (.5cm)
(5,-1) circle (.5cm)
(8,1) circle (.5cm)
(8,-.5) circle (.5cm);

\draw (-6,0) -- (10,0);

\end{tikzpicture}
\caption{An example for $d=3$ and $k=m=4$ showing that the discretization approach from the proof of Theorem~\ref{theorem_special_discrete_sandwich} does not generalize easily. The figure represents a ham sandwich cut for four measures in $\mathbb{R}^3$. The cutting hyperplane, represented by the horizontal line, touches the supports of three different measures, but it cannot be locally modified to give a discrete balanced partition; that is, a partition satisfying the divisibility condition and the analogue of condition~\eqref{eq_hall} simultaneously.}
\label{fig_discretization_counterexample2}
\end{figure}

We were not able to prove Conjecture~\ref{conj_main} even in the case when $X$ has the order type of the vertex set of the cyclic polytope, when one might expect the existence of a purely combinatorial solution.

\section{Acknowledgements}
We are grateful to the reviewers for simplifying the rounding step in the proof of Theorem~\ref{theorem_special_discrete_sandwich}, and for their suggestions for improving the presentation.


\end{document}